\documentclass{article}
\usepackage{amsmath,amssymb,amsthm,color,graphicx,diagbox,tikz,colortbl,array}
\usepackage{setspace}
\usepackage{bussproofs}
\usepackage{multicol}
\usepackage{url}

\title{Very weak subintuitionistic logics}
\author{Taishi Kurahashi\footnote{Email: Kurahashi@people.kobe-u.ac.jp}
\footnote{Graduate School of System Informatics, Kobe University, 1-1 Rokkodai, Nada, Kobe 657-8501, Japan.}
and Mashu Noguchi\footnote{Email: 251x054x@gsuite.kobe-u.ac.jp}
\footnote{Graduate School of System Informatics, Kobe University, 1-1 Rokkodai, Nada, Kobe 657-8501, Japan.}}
\date{}

\theoremstyle{plain}
\newtheorem{thm}{Theorem}[section]
\newtheorem*{thm*}{Theorem}
\newtheorem{lem}[thm]{Lemma}
\newtheorem{prop}[thm]{Proposition}
\newtheorem{cor}[thm]{Corollary}
\newtheorem{fact}[thm]{Fact}
\newtheorem*{fact*}{Fact}

\newtheorem*{prob*}{Problem}
\newtheorem*{cl*}{Claim}
\newtheorem{cl}{Claim}[section]
\newtheorem*{scl*}{Subclaim}

\theoremstyle{definition}
\newtheorem{defn}[thm]{Definition}

\newtheorem{rem}[thm]{Remark}

\newcommand{\limp}{\to}
\newcommand{\liff}{\leftrightarrow}

\newcommand{\proves}{\vdash}

\newcommand{\forces}{\Vdash}
\newcommand{\nforces}{\nVdash}

\newcommand{\Logic}[1]{\mathbf{#1}}
\newcommand{\LogicVF}{\Logic{VF}}

\newcommand{\LogicWF}{\Logic{WF}}
\newcommand{\LogicF}{\Logic{F}}
 
\newcommand{\Rule}[1]{\mathrm{(#1)}}
\newcommand{\RuleRC}{\Rule{RC}}
\newcommand{\RuleRD}{\Rule{RD}}
\newcommand{\RuleRI}{\Rule{RI}}
\newcommand{\RuleMP}{\Rule{MP}}
\newcommand{\RuleAF}{\Rule{AF}}
\newcommand{\RuleRA}{\Rule{RA}}
\newcommand{\RuleRE}{\Rule{RE}}

\newcommand{\PropVar}{\mathrm{Prop}}
\newcommand{\PFml}{\mathrm{Fml}_\mathrm{P}}

\newcommand{\Subfml}{\mathrm{Sub}}
\newcommand{\lconj}{\bigwedge}
\newcommand{\ldisj}{\bigvee}

\newcommand{\MFml}{\mathrm{Fml}_\mathrm{M}}
\newcommand{\goedelTr}{\mathcal{G}}
\newcommand{\corsiTr}{\mathcal{C}}

\begin{document}

\maketitle

\begin{abstract}
    We introduce a new propositional logic, called very weak subintuitionistic logic $\LogicVF$, by adapting the relational semantics of Fitting, Marek, and Truszczy{\'n}ski for the pure logic of necessitation $\Logic{N}$ to the propositional setting.
    We prove that $\LogicVF$ and its closed negative extensions are sound and complete with respect to this semantics, and that they have the disjunction property and the finite frame property.
    We also prove that $\LogicVF$ is strictly weaker than the weak subintuitionistic logic $\LogicWF$ of Maleki and de Jongh.
    Finally, we study modal companions of $\LogicVF$ and its closed negative
    extensions via Corsi's modified G{\"o}del translation.
\end{abstract}

\section{Introduction}

In the Kripke semantics of intuitionistic propositional logic $\Logic{Int}$, frames are required to be preorders, and valuations are required to satisfy persistency.
Logics stronger than $\Logic{Int}$, that is, superintuitionistic logics, are obtained by imposing additional frame conditions (see Chagrov and Zakharyaschev \cite{CZ97}).
On the other hand, Corsi \cite{Cor87} investigated what logics arise from dropping preorder and persistency from Kripke models.
In particular, Corsi introduced the logic $\LogicF$ corresponding to the class of Kripke models in which the preorder condition and persistency are dropped.
Logics weaker than $\Logic{Int}$ are often called subintuitionistic logics.
See also Restall \cite{Res94} and Visser \cite{Vis81} for related approaches and motivations.
The study of modal companions is also important in this area.
Among other things, Corsi proved that $\Logic{K}$ is a modal companion of $\LogicF$ via Corsi's modified G{\"o}del translation.

A further development in this direction was given by Maleki and de Jongh \cite{MdJ17,dJM18}.
They introduced neighborhood semantics for propositional subintuitionistic logics, and obtained the weak subintuitionistic logic $\LogicWF$, together with a family of its extensions.
These logics are strictly weaker than $\LogicF$.

On the modal side, there is a very weak non-normal modal logic, the pure logic of necessitation $\Logic{N}$, introduced by Fitting, Marek, and Truszczy{\'n}ski \cite{FMT92}.
This logic is obtained from classical propositional logic by adding only the necessitation rule.
Fitting, Marek, and Truszczy{\'n}ski introduced a relational semantics for $\Logic{N}$ in which the accessibility relation is not a single binary relation, but a family of binary relations indexed by modal formulas.
The first author and collaborators
\cite{Kog26,KK25,Kur24,KS26,Sat25} have studied $\Logic{N}$ and its extensions in the context of provability logic, finite frame property, sequent calculus, and interpolation.

In this paper, we introduce a new logic, called very weak subintuitionistic logic $\LogicVF$, together with its extensions.
We introduce a propositional counterpart of the semantics of Fitting, Marek, and Truszczy{\'n}ski, which we call propositional FMT semantics.
Using this semantics, we show that $\LogicVF$ is strictly weaker than the weak subintuitionistic logic $\LogicWF$, which justifies the name ``very weak''.

We then study the completeness and finite frame property of extensions of
$\LogicVF$ with respect to the FMT semantics.
In this paper, we mainly consider extensions of $\LogicVF$ obtained by adding axioms of the form $\neg C$, where $\neg C$ is closed and provable in $\Logic{Int}$.
We call such formulas closed negative axioms.
For a finite set $X$ of closed negative axioms, we write $\LogicVF+X$ for the logic obtained by adding all formulas in $X$ as axioms to $\LogicVF$, and call it a closed negative extension of $\LogicVF$.
We first prove that every closed negative extension of $\LogicVF$ has the disjunction property.
Using this, we then prove that $\LogicVF+X$ is sound and complete with respect to the class of all finite FMT frames validating all elements of $X$.

Finally, we study modal companions of $\LogicVF$ and its closed negative
extensions.
We show that $\LogicVF$ corresponds to the pure logic of necessitation $\Logic{N}$ via Corsi's modified G{\"o}del translation.
More generally, if $X$ is a finite set of closed negative axioms, we associate with
$X$ a finite set $X^\ast$ of closed modal formulas and prove that both
$\Logic{N}+X^\ast$ and $\Logic{N}+X^\ast+\{\neg\Box \neg^{2k} \bot \mid k \geq 0\}$ are modal companions of $\LogicVF+X$.

The paper is organized as follows.
In Section \ref{sec:VF}, we introduce $\LogicVF$ and its closed negative extensions.
In Section \ref{sec:DP}, we prove the disjunction property of these logics.
In Section \ref{sec:FMT}, we introduce propositional FMT semantics.
We then prove soundness of $\LogicVF$ and its extensions and analyze frame conditions for closed negative axioms.
In Section \ref{sec:FFP}, we prove the finite frame property for closed negative extensions of $\LogicVF$.
In Section \ref{sec:MC}, we discuss modal companions.
In Section \ref{sect:conclude}, we conclude the present paper by mentioning several directions for future work.
Appendix is devoted to the finite frame property for closed modal extensions of $\Logic{N}$.

All of the content presented in this paper has been formalized and verified using the proof assistant Lean 4.
This formalization was carried out as part of the Formalized Formal Logic project \cite{SN26}, led by the second author, which is licensed under the Apache License 2.0.
The formalized version of the content of this paper is available at the following URL:
\begin{quote}
    \begin{center}
        \url{https://github.com/FormalizedFormalLogic/VeryWeakSubintuitionistic/releases/tag/KN26}
    \end{center}
\end{quote}

\section{The logic $\LogicVF$ and its extensions}\label{sec:VF}

In this section, we introduce the logic $\LogicVF$ and its extensions.
Let $\PropVar$ be the set of all propositional variables.
Formulas are constructed from propositional variables with the logical connectives $\land$, $\lor$, $\limp$, and $\bot$.
The connectives $\lnot$, $\top$, and $\liff$ are introduced as usual abbreviations $\lnot A \equiv A \limp \bot$, $\top \equiv \lnot \bot$, and $A \liff B \equiv (A \limp B) \land (B \limp A)$, respectively.
For a finite set of formulas $\Gamma = \{A_1,\dots,A_n\}$, we write $\lconj \Gamma$ for $A_1 \land \cdots \land A_n$ and $\ldisj \Gamma$ for $A_1 \lor \cdots \lor A_n$\footnote{As we will prove in Proposition \ref{prop:VF_aux_theorems} below, the exchange laws for $\land$ and $\lor$ hold in $\LogicVF$, as well as in the other logics under consideration. Therefore, the order of conjuncts and disjuncts need not be a concern.}.
If $\Gamma = \emptyset$, $\lconj \emptyset = \top$ and $\ldisj \emptyset = \bot$.
Let $\PFml$ denote the set of all propositional formulas.

\begin{defn}
    The logic $\LogicVF$ is defined by a Hilbert system with the following axioms and rules.
    \begin{multicols}{2}
        \begin{enumerate}
            \item $A \land B \limp A$
            \item $A \land B \limp B$
            \item $A \limp A \lor B$
            \item $B \limp A \lor B$
            \item $A \limp A$
            \item $\bot \limp A$
            \item $A \land (B \lor C) \limp (A \land B) \lor (A \land C)$
            \item \AxiomC{$A \limp B$} \AxiomC{$A \limp C$} \RightLabel{$\RuleRC$} \BinaryInfC{$A \limp B \land C$} \DisplayProof
            \item \AxiomC{$A \limp C$} \AxiomC{$B \limp C$} \RightLabel{$\RuleRD$} \BinaryInfC{$A \lor B \limp C$} \DisplayProof
            \item \AxiomC{$A \limp B$} \AxiomC{$B \limp C$} \RightLabel{$\RuleRI$} \BinaryInfC{$A \limp C$} \DisplayProof
            \item \AxiomC{$A$} \AxiomC{$A \limp B$} \RightLabel{$\RuleMP$} \BinaryInfC{$B$} \DisplayProof
            \item \AxiomC{$A$} \RightLabel{$\RuleAF$} \UnaryInfC{$B \limp A$} \DisplayProof
        \end{enumerate}
    \end{multicols}
\end{defn}

\begin{rem}\label{RA}
    Maleki and de Jongh's weak subintuitionistic logic $\LogicWF$ introduced in \cite{MdJ17} can be obtained by adding the rule $\RuleRE$ to our logic $\LogicVF$.
    \begin{prooftree}
        \AxiomC{$A \liff B$}
        \AxiomC{$C \liff D$}
        \RightLabel{$\RuleRE$}
        \BinaryInfC{$(A \limp C) \liff (B \limp D)$}
    \end{prooftree}

    Later in Theorem \ref{VF_vs_WF}, we will prove that $\LogicVF$ is strictly weaker than $\LogicWF$.
    Note that the rule \AxiomC{$A$} \AxiomC{$B$} \RightLabel{$\RuleRA$} \BinaryInfC{$A \land B$} \DisplayProof is adopted as one of the rules of $\LogicWF$ in their paper.
    We do not include it in the definition of $\LogicVF$ because it is simply admissible in $\LogicVF$ (and also in $\LogicWF$):

    \begin{prooftree}
        \AxiomC{$A$}
        \RightLabel{$\RuleAF$}
        \UnaryInfC{$\top \limp A$}
        \AxiomC{$B$}
        \RightLabel{$\RuleAF$}
        \UnaryInfC{$\top \limp B$}
        \RightLabel{$\RuleRC$}
        \BinaryInfC{$\top \limp A \land B$}
        \AxiomC{$\top$}
        \RightLabel{$\RuleMP$}
        \BinaryInfC{$A \land B$}
    \end{prooftree}
\end{rem}


\begin{prop} \label{prop:VF_aux_theorems}
    The following formulas are theorems of $\LogicVF$:
    \begin{enumerate}
        \item $(A \land B) \limp (B \land A)$,
        \item $(A \lor B) \limp (B \lor A)$,
        \item $(A \lor B) \land (A \lor C) \limp A \lor (B \land C)$.
    \end{enumerate}
\end{prop}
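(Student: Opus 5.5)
Each item will be derived directly in the Hilbert system, using only the axioms together with the rules $\RuleRC$, $\RuleRD$ and $\RuleRI$; neither $\RuleMP$ nor $\RuleAF$ is needed. All three derivations are short.

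For item 1, we apply $\RuleRC$ to the axioms $A \land B \limp B$ and $A \land B \limp A$, which gives $A \land B \limp B \land A$. Item 2 is the dual. We apply $\RuleRD$ to the axioms $A \limp B \lor A$ and $B \limp B \lor A$, which gives $A \lor B \limp B \lor A$.

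Item 3 is the only one that requires some care, because $\LogicVF$ has only the single distribution axiom $A \land (B \lor C) \limp (A \land B) \lor (A \land C)$. Any other distribution pattern must be obtained from it by commuting conjuncts (item 1) and chaining implications with $\RuleRI$. Set $D \equiv (A \lor B) \land (A \lor C)$. We argue in four steps.
\begin{enumerate}
\item The distribution axiom, instantiated with $A \lor B$ in place of $A$, gives $D \limp ((A \lor B) \land A) \lor ((A \lor B) \land C)$.
\item The left disjunct is handled by the axiom $(A \lor B) \land A \limp A$ followed by the axiom $A \limp A \lor (B \land C)$, combined with $\RuleRI$. This gives $(A \lor B) \land A \limp A \lor (B \land C)$.
\item For the right disjunct, item 1 gives $(A \lor B) \land C \limp C \land (A \lor B)$. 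Distribution then gives $C \land (A \lor B) \limp (C \land A) \lor (C \land B)$. We now bound each disjunct separately. We have $C \land A \limp A$, hence $C \land A \limp A \lor (B \land C)$ by $\RuleRI$. We have $C \land B \limp B \land C$ by item 1, hence $C \land B \limp A \lor (B \land C)$ by $\RuleRI$ with the disjunction axiom. $\RuleRD$ combines these two, and chaining with $\RuleRI$ gives $(A \lor B) \land C \limp A \lor (B \land C)$.
\item $\RuleRD$ applied to the results of steps 2 and 3 yields $((A \lor B) \land A) \lor ((A \lor B) \land C) \limp A \lor (B \land C)$. One final application of $\RuleRI$ with step 1 yields item 3.
\end{enumerate}

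The main obstacle, though a mild one, is this bookkeeping in item 3. We must make sure that every use of distribution matches the exact form of the axiom, with the shared conjunct on the left, which is why item 1 is proved first and reused. Beyond that, every step is an axiom instance or an application of $\RuleRC$, $\RuleRD$ or $\RuleRI$.
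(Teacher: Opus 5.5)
Your proposal is correct and matches the paper's proof essentially step for step. Items 1 and 2 use the same single applications of $\RuleRC$ and $\RuleRD$. For item 3 you use the same decomposition: distribution with $A \lor B$ as the shared conjunct, the left disjunct bounded through $A$, and the right disjunct commuted to $C \land (A \lor B)$, distributed and bounded piecewise, with everything combined by $\RuleRD$ and $\RuleRI$.
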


\begin{proof}
    1. By the axioms, we have $\LogicVF \vdash A\land B\limp B$ and $\LogicVF \vdash  A\land B\limp A$.
    Therefore, by $\RuleRC$, we obtain $\LogicVF \vdash A\land B\limp B\land A$.

    \medskip

    2. By the axioms, we have $\LogicVF \vdash A\limp B\lor A$ and $\LogicVF \vdash B\limp B\lor A$.
    Therefore, by $\RuleRD$, we obtain $\LogicVF \vdash A\lor B\limp B\lor A$.

    \medskip

    3. Let $D: \equiv A\lor(B\land C)$.
    First, by the axioms, $\LogicVF \vdash (A\lor B)\land A\limp A$ and
    $\LogicVF \vdash  A\limp D$.
    Hence, by $\RuleRI$,
    \begin{align}\label{VF1}
        \LogicVF \vdash (A\lor B)\land A\limp D.
    \end{align}

    By item 1, we have $\LogicVF \vdash (A\lor B)\land C\limp C\land(A\lor B)$.
    By the distributivity axiom, $\LogicVF \vdash C\land(A\lor B)\limp (C\land A)\lor(C\land B)$.
    Hence, by $\RuleRI$,
    \begin{align}\label{VF2}
        \LogicVF \vdash (A\lor B)\land C\limp (C\land A)\lor(C\land B).
    \end{align}

    By the axioms, $\LogicVF \vdash C\land A\limp A$ and $\LogicVF \vdash A\limp D$.
    Hence $\LogicVF \vdash C\land A\limp D$ by $\RuleRI$.
    Also, by item 1, $\LogicVF \vdash C\land B\limp B\land C$,
    and $\LogicVF \vdash B\land C\limp D$ by the axiom.
    Hence $\LogicVF \vdash C\land B\limp D$ by $\RuleRI$.
    Therefore, by $\RuleRD$, we get $\LogicVF \vdash (C\land A)\lor(C\land B)\limp D$.
    Combining this with (\ref{VF2}), we obtain
    \begin{align}\label{VF3}
        \LogicVF \vdash (A\lor B)\land C\limp D
    \end{align}
    by $\RuleRI$.

    By applying $\RuleRD$ to (\ref{VF1}) and (\ref{VF3}), $\LogicVF \vdash ((A\lor B)\land A)\lor((A\lor B)\land C)\limp D$.
    By the distributivity axiom, we have
    \[
        \LogicVF \vdash (A\lor B)\land(A\lor C) \limp ((A\lor B)\land A)\lor((A\lor B)\land C).
    \]
    Thus, we conclude $\LogicVF \vdash (A\lor B)\land(A\lor C)\limp A\lor(B\land C)$.
\end{proof}





In the present paper, we discuss not only the logic $\LogicVF$, but also
some extensions of $\LogicVF$ in particular forms.
A formula is said to be \emph{closed} if it contains no propositional variables.

\begin{defn}
    \leavevmode
    \begin{enumerate}
        \item We say $B \in \PFml$ is a \emph{closed negative axiom} if $B$ is a closed formula, $B$ is of the form $\neg C$, and $\Logic{Int}\vdash B$.

        \item For each finite set $X$ of closed negative axioms, we denote by $\LogicVF + X$ the logic obtained by extending $\LogicVF$ with axioms $B \in X$.
              We call every extension of $\LogicVF$ of this kind a \emph{closed negative extension} of $\LogicVF$.
    \end{enumerate}
\end{defn}

Notice that every closed negative extension of $\LogicVF$ is consistent because it is a sublogic of $\Logic{Int}$.
Examples of closed negative axioms are $\neg \bot$, $\neg \neg \top$, $\neg (\top\limp\neg\top)$, and so on.

\section{Disjunction property}\label{sec:DP}

Before proving the completeness and finite frame property of $\LogicVF$ and its extensions, we prove the \emph{disjunction property} (DP) of them.
We say that a logic $L$ has DP if for any formulas $A$ and $B$, if $L \proves A \lor B$, then $L \proves A$ or $L \proves B$.
One syntactic way to prove DP is using Aczel's modified version \cite{Acz68} of Kleene's slash notation \cite{Kle62}.

\begin{defn}
    For each logic $L$, we define the relation $|_L\, A$ inductively as follows:
    \begin{enumerate}
        \item $|_L\, p$ does not hold for each $p \in \PropVar$,
        \item $|_L\, \bot$ does not hold,
        \item $|_L\, A \land B : \iff |_L\, A$ and $|_L\, B$,
        \item $|_L\, A \lor B :\iff |_L\, A$ or $|_L\, B$,
        \item $|_L\, A \limp B :\iff L \proves A \limp B$ and (if $|_L\, A$, then $|_L\, B$).
    \end{enumerate}
\end{defn}

\begin{lem} \label{lem:VF_slash_lemma}
    Let $L$ be any closed negative extension of $\LogicVF$.
    For any $A \in \PFml$, $L \proves A$ if and only if $|_L \, A$.
\end{lem}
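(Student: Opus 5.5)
We prove the two directions separately; the left-to-right direction uses the right-to-left one.

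\emph{Right to left.} We show that $|_L\, A$ implies $L \proves A$ by induction on $A$.
\begin{itemize}
\item For $p$ and $\bot$ there is nothing to prove, since the slash never holds.
\item If $|_L\, A \land B$, then $|_L\, A$ and $|_L\, B$. By the induction hypothesis $L \proves A$ and $L \proves B$. The admissible rule $\RuleRA$ from Remark \ref{RA} then gives $L \proves A \land B$; its derivation uses only $\RuleAF$, $\RuleRC$, $\RuleMP$ and the theorem $\top$, so it works in any extension $L$ of $\LogicVF$.
\item If $|_L\, A \lor B$, say $|_L\, A$, then $L \proves A$ by the induction hypothesis. The axiom $A \limp A \lor B$ and $\RuleMP$ give $L \proves A \lor B$. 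The case $|_L\, B$ is symmetric.
\item If $|_L\, A \limp B$, then $L \proves A \limp B$ holds by the definition of the slash.
\end{itemize}

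\emph{Left to right.} We show that $L \proves A$ implies $|_L\, A$ by induction on the length of an $L$-derivation of $A$. Every formula in the derivation is a theorem of $L$, so for an implication $A \limp B$ only the clause ``if $|_L\, A$ then $|_L\, B$'' needs checking. First the axioms of $\LogicVF$:
\begin{itemize}
\item For $A \land B \limp A$, $A \land B \limp B$, $A \limp A \lor B$, $B \limp A \lor B$ and $A \limp A$, the clause is immediate from the slash clauses for $\land$ and $\lor$.
\item For $\bot \limp A$ it holds vacuously, because $|_L\, \bot$ never holds.
\item For distributivity, $|_L\, A \land (B \lor C)$ gives $|_L\, A$ and ($|_L\, B$ or $|_L\, C$). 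Hence $|_L\, A \land B$ or $|_L\, A \land C$, so $|_L\, (A \land B) \lor (A \land C)$.
\end{itemize}

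For the rules we assume the slash for the premises:
\begin{itemize}
\item For $\RuleRC$, the conclusion $A \limp B \land C$ is a theorem. If $|_L\, A$, the premises give $|_L\, B$ and $|_L\, C$, hence $|_L\, B \land C$.
\item $\RuleRD$ and $\RuleRI$ are handled in the same way, by case analysis and by chaining the two conditional clauses, respectively.
\item For $\RuleMP$, $|_L\, A$ and $|_L\, A \limp B$ give $|_L\, B$ directly.
\item For $\RuleAF$, from $|_L\, A$ we need $|_L\, B \limp A$. The implication is a theorem of $L$, and its conditional clause holds because $|_L\, A$ holds outright.
\end{itemize}

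The main point is the closed negative axioms $\neg C \in X$. Here $L \proves C \limp \bot$, and we must show that $|_L\, C$ fails. Suppose $|_L\, C$. By the right-to-left direction, $L \proves C$, and with $\RuleMP$ we get $L \proves \bot$. This contradicts the consistency of $L$, which holds because $L$ is a sublogic of $\Logic{Int}$. This is the only place where the special form of the extra axioms is used: we need only that they are negations and that $L$ is consistent. Closedness is not needed for this step.
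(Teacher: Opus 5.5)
Your proposal is correct and follows the paper's proof. You prove right-to-left by induction on $A$, spelling out the use of the admissible rule $\RuleRA$ for conjunctions, and left-to-right by induction on derivations, handling the closed negative axioms exactly as the paper does via the already-proved direction and the consistency of $L$.
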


\begin{proof}
    $(\Leftarrow)$:
    This direction is proved by induction on the construction of $A$.

    \medskip

    $(\Rightarrow)$:
    We prove this direction by induction on the length of a proof of $A$ in $L$.
    It suffices to check that every axiom of $L$ is slashed and that all rules
    preserve the slash relation.
    It is easily verified that all axioms of $\LogicVF$ are slashed simply by the definition of $|_L$.

    Let $\neg C$ be a closed negative axiom of $L$.
    Suppose $|_L\,C$.
    By the already proved direction $(\Leftarrow)$, we have $L \vdash C$.
    Since $L \vdash C \limp \bot$, we obtain $L \vdash \bot$ by $\RuleMP$, contradicting the consistency of $L$.
    Thus the condition ``$|_L\,C \Rightarrow |_L\,\bot$'' is vacuously satisfied, and hence $|_L\,\neg C$.

    We show that the rules preserve the slash relation.
    For $\RuleRC$, suppose $|_L\, A \limp B$ and $|_L\, A \limp C$.
    Then $L \vdash A \limp B$ and $L \vdash A \limp C$, so by $\RuleRC$, we have $L \vdash A \limp B \land C$.
    Moreover, if $|_L\,A$, then we get $|_L\,B$ and $|_L\,C$ from $|_L\, A \limp B$ and
    $|_L\, A \limp C$.
    Hence $|_L\, B \land C$.
    We have proved $|_L\, A \limp B \land C$.
    The other cases are similar.
\end{proof}

\begin{thm}[Disjunction Property] \label{thm:VF_has_DP}
    Let $L$ be any closed negative extension of $\LogicVF$.
    Then $L$ has DP.
\end{thm}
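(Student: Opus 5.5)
The plan is to derive the disjunction property directly from the slash characterization in Lemma \ref{lem:VF_slash_lemma}. Let $L$ be a closed negative extension of $\LogicVF$, and suppose $L \proves A \lor B$.

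By the direction $(\Rightarrow)$ of Lemma \ref{lem:VF_slash_lemma} we obtain $|_L\, A \lor B$. By the clause of the slash definition for disjunction, this means $|_L\, A$ or $|_L\, B$. Applying the direction $(\Leftarrow)$ of Lemma \ref{lem:VF_slash_lemma} to whichever disjunct is slashed gives $L \proves A$ or $L \proves B$, which is exactly DP.

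All the real work sits inside Lemma \ref{lem:VF_slash_lemma}, which we are allowed to assume. That lemma has two delicate points, both already handled there. First, the closed negative axioms $\neg C$ must be slashed; this uses the consistency of $L$, which holds because $L$ is contained in $\Logic{Int}$. Second, the rules must preserve slashing. For $\RuleMP$ this comes directly from the implication clause. For $\RuleAF$, since $|_L\, A$ already holds, the conditional part of $|_L\, B \limp A$ is trivially satisfied, and $L \proves B \limp A$ follows by $\RuleAF$ itself.

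Consequently the theorem itself has no real obstacle. The only step needing care is to invoke both directions of the lemma, not just one: $(\Rightarrow)$ to pass from the provable disjunction to its slash, and $(\Leftarrow)$ to return from the slashed disjunct to provability.
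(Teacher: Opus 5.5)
Your proposal is correct and matches the paper's proof: from $L \proves A \lor B$ you pass to $|_L\, A \lor B$ via Lemma \ref{lem:VF_slash_lemma}, unfold the disjunction clause, and return to provability of a disjunct with the converse direction. Your side remarks on slashing the closed negative axioms (via consistency of $L$) and on preservation of slashing under $\RuleMP$ and $\RuleAF$ are also accurate.
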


\begin{proof}
    Suppose $L \proves A \lor B$.
    By Lemma \ref{lem:VF_slash_lemma}, $|_L\, A \lor B$.
    Then, by the definition, $|_L\, A$ or $|_L\, B$.
    By Lemma \ref{lem:VF_slash_lemma} again, we obtain $L \proves A$ or $L \proves B$.
\end{proof}

\section{FMT semantics}\label{sec:FMT}

In \cite{FMT92}, Fitting, Marek, and Truszczy{\'n}ski introduced a relational semantics for the pure logic of necessitation $\Logic{N}$, which is a very weak non-normal modal logic.
In their semantics, the accessibility relation is not given as a single binary
relation, but as a family of binary relations $\{R_A\}$ indexed by modal
formulas $A$.

In this section, by appropriately modifying their idea for the present
propositional setting, we introduce a new semantics for $\LogicVF$ and its
extensions.
We call this semantics \emph{propositional FMT semantics}, or simply \emph{FMT semantics}.

\subsection{FMT models and soundness of $\LogicVF$}

\begin{defn}[FMT frames]
    An \emph{FMT frame} $F = \langle W, \{R_A\}_{A \in \PFml}, r \rangle$ is a tuple consisting of the following.
    \begin{itemize}
        \item $W$ is a non-empty set, called the set of worlds.
        \item $R_A$ is a binary relation on $W$ indexed by a propositional formula $A$.
        \item $r \in W$ is called the root, in the sense that $r R_A x$ for all $A \in \PFml$ and $x \in W$.
    \end{itemize}
\end{defn}

\begin{defn}[FMT models]
    An \emph{FMT model} is a pair $\langle F, V \rangle$, where $F$ is an FMT frame and $V \colon W \to 2^\PropVar$ is a valuation on $F$.
    The valuation $V$ is extended to a forcing relation between worlds and
    propositional formulas, that is, a relation on $W\times \PFml$, as follows:
    \begin{itemize}
        \item $M, x \forces p :\iff p \in V(x)$ where $p \in \PropVar$,
        \item $M, x \forces \bot$ never holds,
        \item $M, x \forces A \land B : \iff M, x \forces A$ and $M, x \forces B$,
        \item $M, x \forces A \lor B : \iff M, x \forces A$ or $M, x \forces B$,
        \item $M, x \forces A \limp B : \iff$ for any $y \in W$ satisfying $x R_{A \limp B} y$, $M, y \forces A$ implies $M, y \forces B$.
    \end{itemize}

    We say that a formula $A$ is \emph{valid} in an FMT model $M$ (written $M \models A$) if $M, x \Vdash A$ for all $x \in W$.
    We say that $A$ is valid in an FMT frame $F$ (written $F \models A$) if $A$ is valid in all FMT models based on $F$.
\end{defn}

We often simply write $x \forces A$ instead of $M,x\forces A$.
We write $x \in F$ or $x \in M$ to indicate that $x$ is a world of the frame $F$ or the model $M$, respectively.
By the definition of the forcing relation, we immediately obtain the following
clauses for the abbreviations $\neg$, $\top$, and $\leftrightarrow$.

\begin{itemize}
    \item $x \forces \lnot A$ if and only if for any $y$, if $x R_{\lnot A} y$ then $y \nforces A$.
    \item $x \forces \top$ always holds.
    \item $x \forces A \liff B$ if and only if for any $y$ satisfying $x R_{A \limp B} y$, $y \forces A$ implies $y \forces B$, and for any $z$ satisfying $x R_{B \limp A} z$, $z \forces B$ implies $z \forces A$.
\end{itemize}

\begin{thm}[Soundness of $\LogicVF$ with respect to FMT semantics]
    For any $A \in \PFml$ and FMT frame $F$, if $\LogicVF \vdash A$, then
    $F \models A$.
\end{thm}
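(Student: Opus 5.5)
The plan is to fix an arbitrary FMT frame $F$ and an arbitrary model $M$ based on $F$. I will show by induction on the length of a $\LogicVF$-derivation that every theorem is valid in $M$. Almost everything reduces to one observation about implications, which uses the root in an essential way.

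\emph{Key lemma.} For any formulas $A,B$, the following are equivalent:
(i) $M \models A \limp B$;
(ii) for every $y \in W$, $y \forces A$ implies $y \forces B$.

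For (ii)$\Rightarrow$(i): given any $x$ and any $y$ with $x R_{A\limp B} y$, condition (ii) applies to $y$ directly, so $x \forces A \limp B$.
For (i)$\Rightarrow$(ii): since $r \forces A\limp B$ and $r R_{A \limp B} y$ for every $y \in W$ (this is the root condition), every world $y$ satisfies $y\forces A \Rightarrow y \forces B$.

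So validity of an implication in a model is the same as pointwise (``local'') entailment at every world. I expect the only real subtlety of the proof to be this use of the root in (i)$\Rightarrow$(ii). Without it, the rules $\RuleMP$, $\RuleRC$, $\RuleRD$, $\RuleRI$ would fail, since the relations $R_C$ for different $C$ are unrelated.

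\emph{Axioms.} Every axiom of $\LogicVF$ has the form $A \limp B$. By the lemma it suffices to check that $y \forces A$ implies $y \forces B$ at each world $y$. This is immediate from the clauses for $\land$, $\lor$, $\bot$, which are evaluated locally. For example, distributivity reduces to the Boolean fact that ``$y\forces A$ and ($y \forces B$ or $y\forces C$)'' gives ``($y\forces A$ and $y \forces B$) or ($y \forces A$ and $y\forces C$)''. The axiom $\bot \limp A$ holds vacuously, since $\bot$ is forced nowhere.

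\emph{Rules.}
\begin{itemize}
\item $\RuleRC$, $\RuleRD$, $\RuleRI$: translate the valid premises into pointwise entailments via (i)$\Rightarrow$(ii), combine them pointwise using the local clauses for $\land$ and $\lor$ (and transitivity of implication for $\RuleRI$), and translate the result back via (ii)$\Rightarrow$(i).
\item $\RuleMP$: if $M\models A$ and $M \models A \limp B$, then by (ii) every $y$ with $y \forces A$ forces $B$. Every $y$ forces $A$, so $M \models B$.
\item $\RuleAF$: if $M \models A$, then $y \forces A$ for all $y$, so (ii) holds trivially for $B \limp A$. Hence $M \models B \limp A$.
\end{itemize}

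Since $M$ was an arbitrary model on $F$, every theorem of $\LogicVF$ is valid in $F$.
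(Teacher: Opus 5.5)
Your proof is correct and follows the same route as the paper: induction on derivation length in a fixed model $M$, with the root condition used to convert validity of an implication into pointwise entailment at every world. The only difference is presentational: the paper inlines this root argument case by case (e.g.\ in $\RuleMP$ and $\RuleRC$), while you factor it out once as an explicit lemma, which makes all the remaining axiom and rule cases uniform.
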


\begin{proof}
    We prove the theorem by induction on the length of a proof of $A$ in $\LogicVF$.
    Let $F=\langle W,\{R_A\}_{A\in\PFml},r\rangle$ be an arbitrary FMT frame
    and let $V$ be an arbitrary valuation on $F$.
    Let $M=\langle W,\{R_A\}_{A\in\PFml},r,V\rangle$.
    It suffices to show that every axiom of $\LogicVF$ is valid on $M$ and that
    all rules of $\LogicVF$ preserve validity on $M$.

    First, we verify that all axioms of $\LogicVF$ are valid in $M$.
    We only check the axiom $A\land B\limp A$, and the other axioms are verified
    similarly.
    Let $x\in W$ be arbitrary and let $y\in W$ be such that $xR_{A\land B\limp A}y$.
    Suppose $y\forces A\land B$.  Then $y\forces A$ by the definition of the
    forcing relation for conjunction.  Hence $x\forces A\land B\limp A$.
    Since $x$ was arbitrary, $M\models A\land B\limp A$.

    Next, we show that the rules of $\LogicVF$ preserve validity on $M$.
    We show the case $\RuleMP$.
    Suppose $M\models A$ and $M\models A\limp B$.
    Let $x\in W$ be arbitrary.
    Since $M\models A\limp B$, we have in particular $r\forces A\limp B$.
    Since $r$ is the root of $F$, we have $rR_{A\limp B}x$.
    Also, since $M\models A$, we have $x\forces A$.
    Therefore, by $r\forces A\limp B$, we obtain $x\forces B$.
    Since $x\in W$ is arbitrary, $M\models B$.

    Next, we show the case $\RuleRC$.
    Suppose $M\models A \limp B$ and $M\models A\limp C$.
    Let $x, y \in W$ be arbitrary such that $x R_{A \limp B \land C} y$ and $y \forces A$.
    Since $r$ is the root of $F$, we have $rR_{A\limp B}y$ and $r R_{A \limp C} y$.
    By the supposition, we have $r\forces A\limp B$ and $r \forces A \limp C$.
    Therefore, we obtain $y \forces B \land C$.
    Hence, $x \forces A \limp B \land C$.
    Since $x\in W$ is arbitrary, $M\models A \limp B \land C$.

    The cases $\RuleRD$, $\RuleRI$, and $\RuleAF$ are proved in the similar way.
\end{proof}

The definition of FMT frames requires the existence of a root.
As the above proof shows, this requirement is essential for the soundness of the rules of $\LogicVF$.


We now compare our $\LogicVF$ with the weak subintuitionistic logic $\LogicWF$.
Recall from Remark \ref{RA} that $\LogicWF$ is obtained by adding the rule
$\RuleRE$ to $\LogicVF$:
\begin{prooftree}
    \AxiomC{$A \liff B$}
    \AxiomC{$C \liff D$}
    \RightLabel{$\RuleRE$}
    \BinaryInfC{$(A \limp C) \liff (B \limp D)$}
\end{prooftree}
We prove that this rule is not admissible in $\LogicVF$.

\begin{thm}\label{VF_vs_WF}
    $\LogicVF$ is a proper sublogic of $\LogicWF$.
    In particular, $\LogicVF$ is not closed under the rule $\RuleRE$.
\end{thm}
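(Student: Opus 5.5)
Since $\LogicWF$ is $\LogicVF$ plus $\RuleRE$, we have $\LogicVF \subseteq \LogicWF$. So it suffices to exhibit one instance of $\RuleRE$ whose premises are theorems of $\LogicVF$ but whose conclusion is not. We show the conclusion is unprovable by refuting it in an FMT model and invoking the soundness theorem above. The semantic loophole is that $R_A$ is indexed by the syntactic formula $A$, so two provably equivalent implications may be evaluated along completely unrelated relations.

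Concretely, take a propositional variable $p$ and the instance with $A \equiv p$, $B \equiv p \land p$, and $C \equiv D \equiv p$. The premises $p \liff p \land p$ and $p \liff p$ are derivable in $\LogicVF$ from the axioms $A \limp A$ and $A \land B \limp A$, the rule $\RuleRC$ (giving $p \limp p \land p$), and the admissible rule $\RuleRA$ of Remark \ref{RA}. The conclusion is $(p \limp p) \liff (p \land p \limp p)$, and we refute its conjunct $(p \land p \limp p) \limp (p \limp p)$.

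Build the FMT model with $W = \{r, a, b\}$ and root $r$, so $r R_E x$ for all $E$ and all $x$. We set $a R_{p \limp p} b$ and add no further edges beyond the root's. Let $V(b) = \emptyset$, with $p$ false elsewhere as well. Then $a \forces p \land p \limp p$ holds trivially, while $a \nforces p \limp p$ holds vacuously as well; so $p\limp p$ is valid and this instance fails. The reason is that an implication of the form $E \limp E$ is always forced, so it cannot be refuted.

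The real difficulty is choosing an instance in which the two implications being compared have \emph{different} truth conditions. A better choice is $C \equiv D \equiv \bot$ and $A \equiv p$, $B \equiv p \land p$, with conclusion $\neg p \liff \neg(p \land p)$. Take $W = \{r, a, b\}$, put $p$ true only at $b$, and let the relations $R_E$ contain only the root edges except $a R_{\neg p} b$. Then $a \forces \neg(p \land p)$ holds vacuously, since $R_{\neg(p\land p)}$ has no edges out of $a$, but $a \nforces \neg p$ because $b \forces p$. At $r$ and at $b$ we may add no edges out of $b$. The root $r$ must reach every world under every relation, and we must check that $r \nforces \neg(p\land p) \limp \neg p$ along $R_{\neg(p\land p)\limp\neg p}$: since $r R a$, $a \forces \neg(p\land p)$, and $a \nforces \neg p$, this fails. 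Hence the instance $(\neg(p \land p) \limp \neg p)$ is not valid, so the conclusion of $\RuleRE$ is not valid in this model. By soundness, $\LogicVF \nvdash (\neg p \limp \neg(p\land p)) \land (\neg(p\land p) \limp \neg p)$, while $\LogicWF$ proves it. This gives the proper inclusion.

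The main obstacle is exactly this choice of instance, namely selecting $C, D$ so that the implication clauses genuinely depend on their index relations. Once that is settled, checking the premises and checking the root condition are routine.
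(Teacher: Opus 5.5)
Your final argument is correct and takes the same route as the paper. In both, an instance of $\RuleRE$ has $\LogicVF$-provable premises, its conclusion is refuted in a three-world FMT model that exploits the independence of relations indexed by syntactically distinct implications, and soundness finishes the argument. The only difference is the instance: you vary the antecedent ($p$ versus $p\land p$, with consequent $\bot$, so $R_{\neg p}$ and $R_{\neg(p\land p)}$ differ at $a$), whereas the paper varies the consequent ($\top\limp(p\land q)$ versus $\top\limp(q\land p)$).

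In a write-up, delete the abandoned first instance. There both $p\limp p$ and $p\land p\limp p$ are forced everywhere, so it cannot work, and the sentence asserting $a\nforces p\limp p$ is simply wrong. Also state explicitly that $\bot\liff\bot$ is $\LogicVF$-provable, by the axiom $A\limp A$ and $\RuleRA$.
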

\begin{proof}
    Since $\LogicWF$ is obtained from $\LogicVF$ by adding $\RuleRE$, every theorem
    of $\LogicVF$ is a theorem of $\LogicWF$.
    It is easily shown that the formulas $\top\liff\top$ and $(p\land q)\liff(q\land p)$ are provable in $\LogicVF$.
    Since $\LogicWF$ is closed under $\RuleRE$, we have
    \[
        \LogicWF \vdash
        (\top\limp(p\land q))\liff(\top\limp(q\land p)).
    \]
    We show that this formula is not provable in $\LogicVF$.

    We define an FMT model $M=\langle \{0,1,2\},\{R_A\}_{A\in\PFml},0,V\rangle$  as follows:
    \begin{itemize}
        \item $xR_{\top\limp(p\land q)}y \:\iff  x\neq 1$,
        \item $xR_{\top\limp(q\land p)}y : \iff x\leq y$,
        \item $xR_Ay$ holds for all $x, y$ and all other formulas $A$,
        \item $V(p)=\{2\}$ and $V(q)=\emptyset$.
    \end{itemize}

    The relations $R_{\top\limp(p\land q)}$ and $R_{\top\limp(q\land p)}$ are visualized in Figure \ref{Fig:VF_vs_WF}.

    \begin{figure}[ht]
        \centering
        \begin{tikzpicture}[>=stealth, node distance=18mm]
            \tikzstyle{world}=[circle, draw, minimum size=7mm, inner sep=0pt]

            \node at (1.8,1.3) {$R_{\top\limp(p\land q)}$};

            \node[world] (a0) at (0,0) {$0$};
            \node[world] (a1) at (1.8,0) {$1$};
            \node[world] (a2) at (3.6,0) {$2$};

            \draw[->] (a0) edge[loop above] (a0);
            \draw[->] (a0) edge (a1);
            \draw[->] (a0) edge[bend right=25] (a2);

            \draw[->] (a2) edge[bend right=25] (a0);
            \draw[->] (a2) edge (a1);
            \draw[->] (a2) edge[loop above] (a2);

            \node at (7.2,1.3) {$R_{\top\limp(q\land p)}$};

            \node[world] (b0) at (5.4,0) {$0$};
            \node[world] (b1) at (7.2,0) {$1$};
            \node[world] (b2) at (9.0,0) {$2$};

            \draw[->] (b0) edge[loop above] (b0);
            \draw[->] (b1) edge[loop above] (b1);
            \draw[->] (b2) edge[loop above] (b2);

            \draw[->] (b0) edge (b1);
            \draw[->] (b1) edge (b2);
            \draw[->] (b0) edge[bend right=25] (b2);

        \end{tikzpicture}
        \caption{The two specific relations of $M$}\label{Fig:VF_vs_WF}
    \end{figure}

    Since $M,2\nforces p\land q$ and $1R_{\top\limp(p\land q)}2$, we have
    $M,1\nforces\top\limp(p\land q)$.
    On the other hand, since there is no $x$ such that $1R_{\top\limp(q\land p)}x$, we have $M,1\forces\top\limp(q\land p)$.
    Therefore, since $0$ is a root of $M$, we obtain
    \[
        M,0\nforces
        (\top\limp(q\land p))\limp(\top\limp(p\land q)).
    \]
    Thus $M,0\nforces(\top\limp(p\land q))\liff(\top\limp(q\land p))$.
    By soundness, $\LogicVF$ does not prove this formula.
\end{proof}

\subsection{Frame conditions of closed negative axioms}
\label{subsect:frame_condition_FMT}

In this subsection, we analyze frame conditions of FMT frames corresponding to closed negative axioms.
Since closed formulas contain no propositional variables, their validity on an FMT frame is independent of valuations.
So, for a closed formula $A$, whether $x\forces A$ holds depends only on the FMT frame.

\begin{lem}\label{lem:closed}
    Let $A$ be a closed formula.
    If $M$ and $N$ are FMT models based on the same FMT frame $F$, then for every $x \in F$, we have $M,x\forces A$ if and only if $N,x\forces A$.
\end{lem}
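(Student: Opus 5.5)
We argue by induction on the construction of the closed formula $A$, proving simultaneously for all worlds $x \in F$ that $M,x\forces A$ if and only if $N,x\forces A$. The key observation is that the only place where the valuation enters the definition of the forcing relation is the clause for propositional variables. All other clauses refer only to the frame data $W$ and $\{R_A\}_{A\in\PFml}$, which $M$ and $N$ share, together with forcing of proper subformulas. Since $A$ is closed, every subformula of $A$ is closed as well, so the induction hypothesis is available for every subformula encountered.

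The cases are as follows. A closed formula is never a propositional variable, so the base cases reduce to $A \equiv \bot$. Here $M,x\forces\bot$ and $N,x\forces\bot$ both fail for every $x$, so they agree. For $A \equiv B\land C$ and $A \equiv B\lor C$, the subformulas $B$ and $C$ are closed. By the induction hypothesis, $M,x\forces B$ iff $N,x\forces B$, and likewise for $C$, at every $x$. The conjunction and disjunction clauses then give the claim immediately.

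For $A \equiv B\limp C$, we have $M,x\forces B\limp C$ iff for all $y$ with $xR_{B\limp C}y$, $M,y\forces B$ implies $M,y\forces C$. The relation $R_{B\limp C}$ is the same in $M$ and $N$ because both models are based on $F$. Because the induction hypothesis holds at \emph{every} world, in particular at each such $y$, we may replace $M$ by $N$ in both the antecedent and the consequent. This yields exactly the clause for $N,x\forces B\limp C$.

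There is no real obstacle here. The only point requiring care is that the induction must be formulated uniformly over all worlds, rather than for a fixed $x$, so that the implication case can appeal to the hypothesis at the successors $y$. With that formulation the argument is entirely routine.
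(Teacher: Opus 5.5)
Your proof is correct and is essentially the paper's approach. The paper states this lemma without proof, remarking only that closed formulas contain no propositional variables, so their truth is independent of the valuation. Your induction on the construction of $A$, uniform over all worlds and using that $R_{B\limp C}$ is shared frame data, is exactly the routine argument that remark leaves implicit.
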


We investigate the behavior of closed negative formulas.

\begin{prop}\label{prop:closed_negative}
    Let $C$ and $D$ be closed formulas, and let $F$ be an FMT frame.
    \begin{enumerate}
        \item $F\models\neg C$ if and only if there is no $x \in F$ such that $x\forces C$.

        \item $F\models\neg(C\lor D)$ if and only if $F\models\neg C$ and
              $F\models\neg D$.

        \item $F\models\neg(C\limp D)$ if and only if for every world $x$ of $F$,
              there exists a world $y$ such that $xR_{C\limp D}y$, $y\forces C$, and
              $y\nforces D$.

        \item $F\models\neg\neg C$ if and only if for every world $x$ of $F$,
              there exists a world $y$ such that $xR_{\neg C}y$ and $y\forces C$.
    \end{enumerate}
\end{prop}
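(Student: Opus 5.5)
The plan is to derive item 1 directly from the forcing clause for implication, using the root, and then obtain items 2--4 as instances of item 1 by unfolding the forcing clauses for $\lor$ and $\limp$. Throughout, Lemma \ref{lem:closed} lets us speak of $x\forces C$ without reference to a valuation, since all formulas involved are closed.

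For item 1, recall that $x\forces\neg C$ means that every $y$ with $xR_{\neg C}y$ satisfies $y\nforces C$.
\begin{itemize}
\item Suppose no world forces $C$. Then the condition is vacuously satisfied at every $x$, so $F\models\neg C$.
\item Conversely, suppose $F\models\neg C$ and fix any valuation. The root $r$ forces $\neg C$, and $rR_{\neg C}y$ holds for every $y\in W$. Hence no $y$ forces $C$.
\end{itemize}
The root is the only point where the frame structure enters, exactly as in the soundness proof.

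Item 2 then follows by applying item 1 three times. By item 1, $F\models\neg(C\lor D)$ holds iff no world forces $C\lor D$. That holds iff no world forces $C$ and no world forces $D$. By item 1 again, this is $F\models\neg C$ together with $F\models\neg D$.

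Item 3 is obtained by applying item 1 to the closed formula $C\limp D$. This gives $F\models\neg(C\limp D)$ iff no $x$ forces $C\limp D$, that is, iff every $x$ fails to force $C\limp D$. By the forcing clause for implication, $x\nforces C\limp D$ means there is $y$ with $xR_{C\limp D}y$, $y\forces C$ and $y\nforces D$.

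Item 4 is the special case $D\equiv\bot$ of item 3. Here $C\limp\bot$ is literally $\neg C$, so the indexing relation is $R_{\neg C}$. The clause $y\nforces\bot$ holds automatically, leaving exactly the condition that some $y$ with $xR_{\neg C}y$ forces $C$.

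No step is a genuine obstacle. The only point needing care is in item 1: the conclusion ``no $x$ forces $C$'' should be valuation-independent. This is ensured by Lemma \ref{lem:closed}, so it suffices to check a single arbitrary model on $F$.
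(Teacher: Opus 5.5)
Your proposal is correct and follows the paper's proof essentially step for step. You derive item 1 from the root $r$ with $rR_{\neg C}x$ for every $x$, with the vacuous converse. You then obtain item 2 and item 3 by applying item 1 and unfolding the forcing clauses, and item 4 as the instance $D\equiv\bot$ of item 3.
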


\begin{proof}
    1. $(\Rightarrow)$:
    Suppose $F\models\neg C$.
    Let $x \in F$ be arbitrary world.
    For the root $r$ of $F$, we have $r\forces\neg C$.
    Since $rR_{\neg C}x$, we obtain $x \nVdash C$.

    \medskip

    $(\Leftarrow)$: Suppose that $x \nVdash C$ for all $x \in F$.
    Then, for every $x, y \in F$, if $xR_{\neg C}y$, then $y\nforces C$.
    Hence $x\forces\neg C$ for all $x \in F$.
    Therefore $F\models\neg C$.

    \medskip

    2. By (1), $F\models\neg(C\lor D)$ if and only if $x \nVdash C\lor D$ for all $x \in F$.
    This is equivalent to $x \nVdash C$ and $x \nVdash D$ for all $x \in F$.
    By (1) again, this is equivalent to $F\models\neg C$ and $F\models\neg D$.

    \medskip

    3. By (1), $F\models\neg(C\limp D)$ if and only if $x \nVdash C\limp D$ for all $x \in F$.
    For $x \in W$, $x \nVdash C\limp D$ if and only if there exists $y \in F$ such that $xR_{C\limp D}y$, $y\forces C$, and $y\nforces D$.
    This proves the required equivalence.

    \medskip

    4. Immediate from (3).
\end{proof}

As concrete examples, the frame conditions corresponding to some closed
negative axioms are described as follows.
We say that an FMT frame $F=\langle W,\{R_A\}_{A\in\PFml},r\rangle$ is
\emph{$A$-serial} if for every $x\in W$ there exists $y\in W$ such that
$xR_Ay$.

\begin{cor}\label{cor:axioms}
    Let $F$ be any FMT frame.
    \begin{enumerate}
        \item $F\models\neg \neg\top$ if and only if $F$ is
              $\neg\top$-serial.

        \item $F\models\neg(\top\limp\neg\top)$ if and only if for any $x \in F$, there exist $y, z \in W$ such that $xR_{\top\limp\neg\top}y$ and $yR_{\neg\top}z$.
    \end{enumerate}
\end{cor}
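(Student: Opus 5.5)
Both items follow by unwinding Proposition \ref{prop:closed_negative} together with the forcing clauses for the closed formulas involved; no new construction is needed.

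For item 1, apply Proposition \ref{prop:closed_negative}(4) with $C \equiv \top$. It gives that $F\models\neg\neg\top$ holds if and only if for every $x\in F$ there is $y$ with $xR_{\neg\top}y$ and $y\forces\top$. Since $y\forces\top$ holds at every world, the second conjunct is automatic. What remains is exactly the statement that $F$ is $\neg\top$-serial.

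For item 2, apply Proposition \ref{prop:closed_negative}(3) with $C\equiv\top$ and $D\equiv\neg\top$. It gives that $F\models\neg(\top\limp\neg\top)$ holds if and only if for every $x$ there is $y$ with $xR_{\top\limp\neg\top}y$, $y\forces\top$ and $y\nforces\neg\top$. Again $y\forces\top$ is trivial, so only $y\nforces\neg\top$ needs analysis. By the clause for $\neg$, $y\forces\neg\top$ holds if and only if every $z$ with $yR_{\neg\top}z$ satisfies $z\nforces\top$. Because $\top$ is forced everywhere, this amounts to there being no $z$ with $yR_{\neg\top}z$. Hence $y\nforces\neg\top$ holds precisely when some $z$ satisfies $yR_{\neg\top}z$. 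Substituting this back yields the stated condition: for every $x$ there are $y,z$ with $xR_{\top\limp\neg\top}y$ and $yR_{\neg\top}z$.

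The only point needing care is in item 2: one must use the forcing clause for $\neg\top$ at the intermediate world $y$ itself, rather than Proposition \ref{prop:closed_negative}(1), which concerns validity on the whole frame. Everything else is direct substitution, and Lemma \ref{lem:closed} ensures that none of these conditions depends on the valuation, so the frame-level statements are meaningful.
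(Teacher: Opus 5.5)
Your proposal is correct and is the argument the paper intends. The paper gives this corollary no separate proof and treats it as immediate from Proposition \ref{prop:closed_negative} (4) and (3), which is what you do: you use that $\top$ is forced at every world and unfold $y\nforces\neg\top$ at the intermediate world $y$.
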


We now define the class of frames corresponding to closed negative extensions.
Let $X$ be a finite set of closed negative axioms and let $L=\LogicVF+X$.
An FMT frame $F$ is called an \emph{$L$-frame} if $F\models B$ for every
$B\in X$.

\begin{thm}[Soundness for closed negative extensions]\label{thmr:L_sound}
    Let $X$ be a finite set of closed negative axioms and let $L=\LogicVF+X$.
    For any $A\in\PFml$ and any $L$-frame $F$, if $L\vdash A$, then
    $F\models A$.
\end{thm}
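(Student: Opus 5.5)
The plan is induction on the length of a proof of $A$ in $L$, reusing the soundness theorem for $\LogicVF$ almost verbatim. Fix an $L$-frame $F=\langle W,\{R_A\}_{A\in\PFml},r\rangle$, an arbitrary valuation $V$ on $F$, and put $M=\langle F,V\rangle$. It then suffices to show two things: every axiom of $L$ is valid in $M$, and every rule of $L$ preserves validity in $M$.

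The axioms of $L$ are of two kinds. The axioms of $\LogicVF$ are valid in every FMT model. This was shown in the proof of the soundness theorem for $\LogicVF$, and that argument uses no property of $F$ beyond its being an FMT frame. The remaining axioms are the elements $B\in X$. Since $F$ is an $L$-frame, we have $F\models B$ by definition, and so $M\models B$ for the particular valuation $V$. Lemma \ref{lem:closed} even shows that validity of such a closed $B$ does not depend on $V$, although we do not need this.

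The rules of $L$ are exactly those of $\LogicVF$: $\RuleRC$, $\RuleRD$, $\RuleRI$, $\RuleMP$ and $\RuleAF$. The earlier proof shows that each of them preserves validity in any FMT model, using only the root property $rR_Cx$ for all $C$ and $x$. One point needs care. Validity preservation must be argued at the level of a fixed model $M$, not of the frame, because the induction hypothesis is $M\models$ premises. This is exactly how the earlier proof was organized, so it transfers directly.

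Combining these, every formula in an $L$-derivation is valid in $M$, and since $V$ was arbitrary, $F\models A$. I expect no real obstacle here. The only thing worth checking is that none of the rule cases relied on a frame property that $L$-frames might lack, and they do not, since $L$-frames are a subclass of FMT frames.
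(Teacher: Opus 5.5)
Your proposal is correct and follows the paper's proof. The paper also argues by induction on the length of the $L$-proof: the $\LogicVF$ axioms and rules are handled by the earlier soundness argument, and each extra axiom $B\in X$ is valid because $F$ is an $L$-frame. Your choice to run the induction inside a fixed model $M$ is a harmless refinement, not a necessity, since preservation of validity in every model based on $F$ already gives preservation of frame validity, which is how the paper phrases it.
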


\begin{proof}
    The proof is by induction on the length of an $L$-proof of $A$.
    The axioms and rules of $\LogicVF$ are sound by the soundness of $\LogicVF$.
    If a formula occurring in the proof is an additional axiom $B\in X$, then
    $F\models B$ by the definition of $L$-frames.
    Therefore every formula occurring in the proof is valid on $F$.
\end{proof}

\section{Finite frame property for closed negative extensions}\label{sec:FFP}

In this section, we prove the finite frame property not only for $\LogicVF$,
but also for its closed negative extensions.
We fix a finite set $X$ of closed negative axioms, and let $L=\LogicVF+X$.

Let $A$ be a formula.
We define
\[
    \Subfml_X(A):=\Subfml(A)\cup\bigcup_{B\in X}\Subfml(B) \cup \{\bot, \top\}.
\]
Since $X$ is finite, $\Subfml_X(A)$ is finite.

\begin{defn}
    Let $\Gamma,\Delta$ be finite subsets of $\Subfml_X(A)$.  We call a pair
    $(\Gamma,\Delta)$ an \emph{$(A,X)$-tableau}.
    \begin{itemize}
        \item We write $(\Gamma,\Delta)\subseteq(\Gamma',\Delta')$ for
              $\Gamma\subseteq\Gamma'$ and $\Delta\subseteq\Delta'$.
        \item $(\Gamma,\Delta)$ is \emph{$L$-consistent} if $L\nvdash \lconj\Gamma\limp\ldisj\Delta$.
        \item $(\Gamma,\Delta)$ is \emph{saturated} if $\Gamma\cup\Delta=\Subfml_X(A)$.
    \end{itemize}
\end{defn}

\begin{lem}\label{lem:either_L_consistent}
    Let $(\Gamma,\Delta)$ be an $L$-consistent $(A,X)$-tableau and
    $B\in\Subfml_X(A)$.  Then at least one of
    $(\Gamma\cup\{B\},\Delta)$ and $(\Gamma,\Delta\cup\{B\})$ is $L$-consistent.
\end{lem}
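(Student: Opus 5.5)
We argue by contraposition. Suppose both extensions are $L$-inconsistent:
\[
L\vdash \lconj(\Gamma\cup\{B\})\limp\ldisj\Delta
\quad\text{and}\quad
L\vdash \lconj\Gamma\limp\ldisj(\Delta\cup\{B\}).
\]
Write $G:\equiv\lconj\Gamma$ and $D:\equiv\ldisj\Delta$. Using the exchange and associativity facts for $\land$ and $\lor$ (Proposition \ref{prop:VF_aux_theorems}, together with $\RuleRC$, $\RuleRD$ and $\RuleRI$), we first normalize these to
\[
L\vdash G\land B\limp D
\quad\text{and}\quad
L\vdash G\limp D\lor B.
\]
The goal is then to derive $L\vdash G\limp D$, which contradicts the $L$-consistency of $(\Gamma,\Delta)$. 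There is a minor bookkeeping point: $B$ may already lie in $\Gamma$ or $\Delta$, and the empty cases give $\top$ or $\bot$. Since $\lconj$ and $\ldisj$ are taken over sets, these cases cause no trouble. Associativity and idempotence are routine consequences of the axioms together with $\RuleRC$ and $\RuleRD$.

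The derivation uses only rule-level reasoning, never a deduction theorem, since $\LogicVF$ lacks one.
\begin{enumerate}
\item From $A\land B\limp A$ and $G\limp G$, $\RuleRC$ gives $G\limp G$. Combined with $G\limp D\lor B$ via $\RuleRC$, this gives $G\limp G\land(D\lor B)$.
\item The distributivity axiom gives $G\land(D\lor B)\limp(G\land D)\lor(G\land B)$. Chaining with $\RuleRI$ yields $G\limp(G\land D)\lor(G\land B)$.
\item We have $G\land D\limp D$ by the conjunction axiom, and $G\land B\limp D$ by hypothesis. Applying $\RuleRD$ gives $(G\land D)\lor(G\land B)\limp D$.
\item A final application of $\RuleRI$ gives $L\vdash G\limp D$.
\end{enumerate}

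The only delicate step is the first one: in $\LogicVF$ we may only manipulate implications via the listed rules, and not reason under hypotheses. The conversions between $\lconj(\Gamma\cup\{B\})$ and $G\land B$, and between $\ldisj(\Delta\cup\{B\})$ and $D\lor B$, in the needed direction, must therefore be justified by explicit $\RuleRC$/$\RuleRD$/$\RuleRI$ chains. These chains are of the same kind as those in the proof of Proposition \ref{prop:VF_aux_theorems}. Once these normal forms are in place, the core argument is the distributivity computation above.
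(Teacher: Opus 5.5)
Your proof is correct. It shares the paper's overall shape: assume both extensions are inconsistent and derive $\lconj\Gamma\limp\ldisj\Delta$ using only $\RuleRC$, $\RuleRD$, $\RuleRI$ and axioms. The difference is which distributive law carries the key step.

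The paper routes the derivation through $\ldisj\Delta\lor(\lconj\Gamma\land B)$. It combines $\lconj\Gamma\limp\ldisj\Delta\lor B$ with $\lconj\Gamma\limp\ldisj\Delta\lor\lconj\Gamma$ by $\RuleRC$. It then applies the derived law of Proposition~\ref{prop:VF_aux_theorems}(3), $(\ldisj\Delta\lor\lconj\Gamma)\land(\ldisj\Delta\lor B)\limp\ldisj\Delta\lor(\lconj\Gamma\land B)$. It closes with $\RuleRD$ applied to $\ldisj\Delta\limp\ldisj\Delta$ and the first hypothesis.

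You instead pair $G\limp D\lor B$ with $G\limp G$ and pass through $(G\land D)\lor(G\land B)$. This needs only the distributivity axiom itself plus conjunction elimination. Your route is shorter. It also shows that item 3 of Proposition~\ref{prop:VF_aux_theorems} is not needed for this lemma; that item has the longest proof in the proposition and appears to be used only here.

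Two small remarks. In your step 1, $G\limp G$ is simply an instance of the axiom $A\limp A$. The phrase about obtaining it from $A\land B\limp A$ by $\RuleRC$ is garbled, but harmless. Also, you explicitly flag the bookkeeping of $\lconj(\Gamma\cup\{B\})$ versus $G\land B$ and of $\ldisj(\Delta\cup\{B\})$ versus $D\lor B$, including the cases $B\in\Gamma$, $B\in\Delta$, and empty sets. That is more careful than the paper, which silently identifies these formulas. The directions you need, $G\land B\limp\lconj(\Gamma\cup\{B\})$ and $\ldisj(\Delta\cup\{B\})\limp D\lor B$ followed by $\RuleRI$, are indeed routine $\RuleRC$/$\RuleRD$ chains.
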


\begin{proof}
    Suppose, towards a contradiction, that both are not $L$-consistent.  Then
    $L\vdash (\lconj\Gamma\land B)\limp\ldisj\Delta$ and
    $L\vdash \lconj\Gamma\limp(\ldisj\Delta\lor B)$.  By $\RuleRD$, from the
    first formula and $L\vdash\ldisj\Delta\limp\ldisj\Delta$, we get
    $L\vdash \ldisj\Delta\lor(\lconj\Gamma\land B)\limp\ldisj\Delta$.

    On the other hand, by $\RuleRC$, from
    $L\vdash \lconj\Gamma\limp(\ldisj\Delta\lor B)$ and
    $L\vdash \lconj\Gamma\limp(\ldisj\Delta\lor\lconj\Gamma)$, we get
    $L\vdash \lconj\Gamma\limp
        (\ldisj\Delta\lor\lconj\Gamma)\land(\ldisj\Delta\lor B)$.
    By Proposition \ref{prop:VF_aux_theorems}, we have $L\vdash
        (\ldisj\Delta\lor\lconj\Gamma)\land(\ldisj\Delta\lor B)
        \limp \ldisj\Delta\lor(\lconj\Gamma\land B)$.
    Then, two applications of $\RuleRI$ yield $L\vdash\lconj\Gamma\limp\ldisj\Delta$.
    This contradicts the $L$-consistency of $(\Gamma,\Delta)$.
\end{proof}

\begin{lem}[Lindenbaum Lemma]\label{lem:L_lindenbaum}
    Every $L$-consistent $(A,X)$-tableau $(\Gamma,\Delta)$ can be expanded to a
    saturated $L$-consistent $(A,X)$-tableau $(\Gamma',\Delta')\supseteq
        (\Gamma,\Delta)$.
\end{lem}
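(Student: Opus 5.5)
The plan is to saturate $(\Gamma,\Delta)$ one formula at a time, with Lemma \ref{lem:either_L_consistent} doing all the logical work. Since $\Subfml_X(A)$ is finite, we enumerate it as $B_1,\dots,B_n$. We then build a chain of $(A,X)$-tableaux
\[
    (\Gamma,\Delta) = (\Gamma_0,\Delta_0) \subseteq (\Gamma_1,\Delta_1) \subseteq \cdots \subseteq (\Gamma_n,\Delta_n),
\]
keeping every stage $L$-consistent.

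The step from stage $i$ to stage $i+1$ goes as follows. Given that $(\Gamma_i,\Delta_i)$ is $L$-consistent and $B_{i+1}\in\Subfml_X(A)$, Lemma \ref{lem:either_L_consistent} says that at least one of $(\Gamma_i\cup\{B_{i+1}\},\Delta_i)$ and $(\Gamma_i,\Delta_i\cup\{B_{i+1}\})$ is $L$-consistent. We let $(\Gamma_{i+1},\Delta_{i+1})$ be the first of these if it is $L$-consistent, and the second otherwise. Both candidates are pairs of finite subsets of $\Subfml_X(A)$, so each stage is again an $(A,X)$-tableau.

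At the end we take $(\Gamma',\Delta') := (\Gamma_n,\Delta_n)$. It is $L$-consistent by induction on $i$, and it contains $(\Gamma,\Delta)$ because the chain is increasing. It is saturated because every $B_j$ was placed in $\Gamma_j\cup\Delta_j\subseteq\Gamma'\cup\Delta'$, and $\Gamma'\cup\Delta'\subseteq\Subfml_X(A)$ holds trivially.

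There is no real obstacle here, since the substantive reasoning already lives in Lemma \ref{lem:either_L_consistent}. Two details deserve a remark. First, if $B_{i+1}$ already lies in $\Gamma_i$ or $\Delta_i$, one of the two options leaves the tableau unchanged (the sets stay the same) and is therefore $L$-consistent, so the construction is unaffected. Second, the order of conjuncts and disjuncts in $\lconj\Gamma$ and $\ldisj\Delta$ does not matter, by the exchange laws in Proposition \ref{prop:VF_aux_theorems}.
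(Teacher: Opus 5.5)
Your proposal is correct and matches the paper's proof: enumerate the finite set $\Subfml_X(A)$, add each $B_{i+1}$ to the left side when that keeps the tableau $L$-consistent and to the right side otherwise, and use Lemma \ref{lem:either_L_consistent} to keep every stage $L$-consistent. Your extra remarks about formulas already present and about the ordering of conjuncts and disjuncts are harmless and not needed.
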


\begin{proof}
    Let $B_1,\ldots,B_n$ be an enumeration of $\Subfml_X(A)$.
    We define $(A,X)$-tableaux $(\Gamma_i,\Delta_i)$ for $0\leq i\leq n$ as follows:
    $(\Gamma_0,\Delta_0):=(\Gamma,\Delta)$, and
    \[
        (\Gamma_{i+1},\Delta_{i+1}):=
        \begin{cases}
            (\Gamma_i\cup\{B_{i+1}\},\Delta_i) &
            \text{if this tableau is $L$-consistent},              \\
            (\Gamma_i,\Delta_i\cup\{B_{i+1}\}) & \text{otherwise.}
        \end{cases}
    \]
    By Lemma \ref{lem:either_L_consistent}, each $(\Gamma_i,\Delta_i)$ is
    $L$-consistent.
    The final tableau $(\Gamma_n,\Delta_n)$ is saturated and
    extends $(\Gamma,\Delta)$.
\end{proof}

\begin{lem}\label{lem:L_SCT_property}
    Let $(\Gamma,\Delta)$ be a saturated $L$-consistent $(A,X)$-tableau.
    \begin{enumerate}
        \item If $B,C\in\Subfml_X(A)$, $L\vdash B\limp C$, and $B\in\Gamma$,
              then $C\in\Gamma$.
        \item $\bot\in\Delta$.
        \item $\top\in\Gamma$.
        \item If $B\land C\in\Subfml_X(A)$, then
              $B\land C\in\Gamma$ if and only if $B\in\Gamma$ and $C\in\Gamma$.
        \item If $B\lor C\in\Subfml_X(A)$, then
              $B\lor C\in\Gamma$ if and only if $B\in\Gamma$ or $C\in\Gamma$.
        \item If $B\in\Subfml_X(A)$ and $L\vdash B$, then $B\in\Gamma$.
    \end{enumerate}
\end{lem}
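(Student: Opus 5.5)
The plan is to prove item 1 first and derive most of the other items from it, using saturation together with $L$-consistency. Since $(\Gamma,\Delta)$ is saturated, every $B\in\Subfml_X(A)$ lies in $\Gamma$ or in $\Delta$. The general tool is the observation that if $E\in\Gamma$ and $E\in\Delta$, then $L\vdash\lconj\Gamma\limp\ldisj\Delta$. To see this, combine $\lconj\Gamma\limp E$ (chains of the axiom $A\land B\limp A$ and $A\land B\limp B$ with $\RuleRI$, after reordering by Proposition \ref{prop:VF_aux_theorems}) with $E\limp\ldisj\Delta$ (the disjunction axioms and $\RuleRI$), and then apply $\RuleRI$. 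So $\Gamma\cap\Delta=\emptyset$.

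For item 1, suppose $B\in\Gamma$ and $C\notin\Gamma$. Then $C\in\Delta$, and we get $L\vdash\lconj\Gamma\limp B$, $L\vdash B\limp C$, and $L\vdash C\limp\ldisj\Delta$. Two applications of $\RuleRI$ give $L\vdash\lconj\Gamma\limp\ldisj\Delta$, which contradicts consistency.

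The remaining items then follow quickly.
\begin{itemize}
\item Item 2: if $\bot\in\Gamma$, then $\lconj\Gamma\limp\bot$ and $\bot\limp\ldisj\Delta$ (an axiom) give inconsistency via $\RuleRI$. Hence $\bot\in\Delta$ by saturation.
\item Item 6: if $L\vdash B$ and $B\in\Delta$, then $\RuleAF$ gives $\lconj\Gamma\limp B$, and composing with $B\limp\ldisj\Delta$ yields inconsistency. So $B\in\Gamma$.
\item Item 3: this is item 6 applied to $\top=\bot\limp\bot$, which is an instance of the axiom $A\limp A$.
\end{itemize}

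For item 4, the forward direction is item 1 together with the projection axioms. For the converse, suppose $B,C\in\Gamma$ but $B\land C\in\Delta$. We have $\lconj\Gamma\limp B$ and $\lconj\Gamma\limp C$, so $\RuleRC$ gives $\lconj\Gamma\limp B\land C$, and composing with $B\land C\limp\ldisj\Delta$ yields a contradiction.

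For item 5, the backward direction is item 1 with the injection axioms. For the forward direction, suppose $B\lor C\in\Gamma$ but $B,C\notin\Gamma$, so $B,C\in\Delta$. Then $B\limp\ldisj\Delta$ and $C\limp\ldisj\Delta$, so $\RuleRD$ gives $B\lor C\limp\ldisj\Delta$. Composing with $\lconj\Gamma\limp B\lor C$ gives inconsistency.

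The only mildly delicate point is the bookkeeping lemma that $L\vdash\lconj\Gamma\limp E$ for $E\in\Gamma$ and $L\vdash E\limp\ldisj\Delta$ for $E\in\Delta$. This needs an induction on the size of the set, using the associativity and commutativity available from the axioms, $\RuleRC$, $\RuleRD$, $\RuleRI$ and Proposition \ref{prop:VF_aux_theorems}. The edge cases are $\Gamma=\emptyset$ (take $\top$; $\lconj\Gamma\limp\top$ follows by $\RuleAF$ from $\vdash\top$) and $\Delta=\emptyset$ ($\ldisj\Delta=\bot$). I expect this bookkeeping to be routine but the most tedious part.
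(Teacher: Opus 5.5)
Your proposal is correct and takes essentially the paper's approach: item 1 follows from two applications of $\RuleRI$ contradicting consistency, and items 2, 4 and 5 come from the same direct contradictions, with saturation placing the formula in $\Delta$. The only difference is a harmless reordering of items 3 and 6. The paper proves item 3 directly from $L\vdash\top$ and then gets item 6 from items 1 and 3 via $\top\limp B$ (obtained by $\RuleAF$). You instead prove item 6 directly, using $\RuleAF$ to get $\lconj\Gamma\limp B$, and obtain item 3 as the special case $B=\top$.
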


\begin{proof}
    1. Suppose $L\vdash B\limp C$ and $B\in\Gamma$.
    If $C\in\Delta$, then $L\vdash \lconj\Gamma\limp B$ and $L\vdash C\limp\ldisj\Delta$.
    Hence, by $\RuleRI$, $L\vdash \lconj\Gamma\limp\ldisj\Delta$, contradicting the $L$-consistency of $(\Gamma,\Delta)$.
    Since the tableau is saturated, we obtain $C\in\Gamma$.

    \medskip

    2. If $\bot\in\Gamma$, then $L\vdash\lconj\Gamma\limp\ldisj\Delta$ by the
    axiom $\bot\limp \ldisj\Delta$, a contradiction.
    Hence $\bot\in\Delta$ by saturation.

    \medskip

    3. If $\top\in\Delta$, then $L\vdash\lconj\Gamma\limp\ldisj\Delta$ because $L\vdash \top$.
    This is a contradiction.
    Hence $\top\in\Gamma$ by saturation.

    \medskip

    4. $(\Rightarrow)$: This direction follows from (1), using
    $L\vdash B\land C\limp B$ and $L\vdash B\land C\limp C$.

    \medskip

    $(\Leftarrow)$:
    If $B,C\in\Gamma$ and $B\land C\in\Delta$, then
    $L\vdash \lconj\Gamma\limp B$ and $L\vdash \lconj\Gamma\limp C$ implies $L\vdash \lconj\Gamma\limp B\land C$, contradicting the $L$-consistency.  Hence $B\land C\in\Gamma$.

    \medskip

    5. The proof is similar, using the axioms for disjunction and $\RuleRD$.

    \medskip

    6. Suppose $L\vdash B$.  By $\RuleAF$, $L\vdash \top\limp B$.  By (3),
    $\top\in\Gamma$, and by (1), $B\in\Gamma$.
\end{proof}

Since $\Subfml_X(A)$ is finite, the set of all saturated $L$-consistent
$(A,X)$-tableaux is finite.
Let $W_A^X$ be the set of all saturated $L$-consistent $(A,X)$-tableaux.
We define an $(A,X)$-tableau $(\Gamma_r^0,\Delta_r^0)$ by
\begin{itemize}
    \item $\Gamma_r^0:=\emptyset$,
    \item $\Delta_r^0:=
              \{\,B\limp C\in\Subfml_X(A)\mid B\in\Gamma$ and $C\in\Delta$ for some $(\Gamma,\Delta)\in W_A^X\,\}$.
\end{itemize}

\begin{lem}\label{lem:L_root}
    The tableau $(\Gamma_r^0,\Delta_r^0)$ is $L$-consistent.
\end{lem}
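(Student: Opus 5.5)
The plan is to argue by contradiction and to use the disjunction property (Theorem \ref{thm:VF_has_DP}) to push the inconsistency down to a single implication in $\Delta_r^0$. Since $\Gamma_r^0=\emptyset$, we have $\lconj\Gamma_r^0=\top$. So suppose $L\vdash \top\limp\ldisj\Delta_r^0$. Because $\top\equiv\bot\limp\bot$ is an instance of the axiom $A\limp A$, we have $L\vdash\top$, and $\RuleMP$ gives $L\vdash\ldisj\Delta_r^0$.

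If $\Delta_r^0=\emptyset$, then $\ldisj\Delta_r^0=\bot$, so $L\vdash\bot$. This contradicts the consistency of $L$, which holds because $L$ is a sublogic of $\Logic{Int}$.

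Otherwise $\ldisj\Delta_r^0$ is a finite disjunction $D_1\lor\cdots\lor D_k$ of implications $D_i\equiv B_i\limp C_i$. Applying Theorem \ref{thm:VF_has_DP} repeatedly, by induction on $k$ following the bracketing of the disjunction, we obtain $L\vdash B\limp C$ for some $B\limp C\in\Delta_r^0$. I expect this iteration to be the only step needing care, and it is routine: each application of DP splits off one disjunct. If the exact bracketing matters, Proposition \ref{prop:VF_aux_theorems} together with $\RuleRI$ and $\RuleMP$ lets us rearrange the disjunction.

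By the definition of $\Delta_r^0$, there is a saturated $L$-consistent tableau $(\Gamma,\Delta)\in W_A^X$ with $B\in\Gamma$ and $C\in\Delta$. Both $B$ and $C$ lie in $\Subfml_X(A)$, since $B\limp C$ does. Lemma \ref{lem:L_SCT_property}(1), applied to $L\vdash B\limp C$ and $B\in\Gamma$, gives $C\in\Gamma$. So $C\in\Gamma\cap\Delta$.

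It remains to see that this contradicts the $L$-consistency of $(\Gamma,\Delta)$. From the conjunction axioms and $\RuleRI$ we get $L\vdash\lconj\Gamma\limp C$, and from the disjunction axioms and $\RuleRI$ we get $L\vdash C\limp\ldisj\Delta$. (Commutativity, Proposition \ref{prop:VF_aux_theorems}, makes the position of $C$ irrelevant.) One more application of $\RuleRI$ yields $L\vdash\lconj\Gamma\limp\ldisj\Delta$, a contradiction. Hence $(\Gamma_r^0,\Delta_r^0)$ is $L$-consistent.
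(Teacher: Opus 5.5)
Your proof is correct and follows the paper's argument: use $\RuleMP$ with $L\vdash\top$, rule out $\Delta_r^0=\emptyset$ by the consistency of $L$, apply the disjunction property to get a provable $B\limp C\in\Delta_r^0$, and contradict the $L$-consistency of the tableau $(\Gamma,\Delta)\in W_A^X$ that witnesses $B\limp C\in\Delta_r^0$. The only difference is cosmetic. You pass through Lemma~\ref{lem:L_SCT_property}(1) to place $C$ in $\Gamma\cap\Delta$, while the paper chains $\lconj\Gamma\limp B$, $B\limp C$, $C\limp\ldisj\Delta$ directly by $\RuleRI$, which is exactly the content of that lemma's proof.
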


\begin{proof}
    Suppose that $(\Gamma_r^0,\Delta_r^0)$ is not $L$-consistent.
    Then $L\vdash \top\limp\ldisj\Delta_r^0$.
    Since $L\vdash\top$, we obtain $L\vdash\ldisj\Delta_r^0$ by $\RuleMP$.

    If $\Delta_r^0$ is empty, then $L\vdash\bot$, contradicting the consistency
    of $L$.
    Thus $\Delta_r^0$ is nonempty.
    Since $L$ has DP (Theorem \ref{thm:VF_has_DP}), there is $B\limp C\in\Delta_r^0$ such that $L\vdash B\limp C$.

    By the definition of $\Delta_r^0$, there is $(\Gamma,\Delta)\in W_A^X$ such that $B\in\Gamma$ and $C\in\Delta$.
    Then $L\vdash\lconj\Gamma\limp B$ and $L\vdash C\limp\ldisj\Delta$.  Hence, by two applications of $\RuleRI$, we obtain $L\vdash\lconj\Gamma\limp\ldisj\Delta$, contradicting the $L$-consistency of $(\Gamma,\Delta)$.
\end{proof}

By Lemma \ref{lem:L_root} and the Lindenbaum
Lemma, we fix a saturated $L$-consistent $(A,X)$-tableau $(\Gamma_r,\Delta_r)$ extending $(\Gamma_r^0,\Delta_r^0)$.

\begin{defn}
    We define a tuple $M^X_A=\langle W_A^X,\{R_B\}_{B\in\PFml},(\Gamma_r,\Delta_r),V\rangle$ as follows:
    \begin{itemize}
        \item $R_B$ is defined as follows:
              \begin{itemize}
                  \item
                        If $B \equiv C \limp D$ and $C \limp D \in \Subfml_X(A)$,
                        then
                        \[
                            (\Gamma_1, \Delta_1) R_B (\Gamma_2, \Delta_2) :\iff C \limp D \in \Delta_1,\ \text{or}\ C \in \Delta_2,\ \text{or}\ D \in \Gamma_2.
                        \]
                  \item
                        Otherwise, $(\Gamma_1, \Delta_1) R_B (\Gamma_2, \Delta_2)$ always holds.
              \end{itemize}

        \item $(\Gamma,\Delta)\in V(p) : \iff p\in\Gamma$.
    \end{itemize}
\end{defn}

\begin{lem}\label{lem:L_countermodel_is_FMT_model}
    The structure $M_A^X$ is a finite FMT model.
\end{lem}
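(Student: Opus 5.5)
We have to check three things: $W_A^X$ is finite and non-empty, $(\Gamma_r,\Delta_r)\in W_A^X$, and $(\Gamma_r,\Delta_r)$ is a root, i.e.\ $(\Gamma_r,\Delta_r) R_B (\Gamma,\Delta)$ for every $B\in\PFml$ and every $(\Gamma,\Delta)\in W_A^X$. The valuation $V$ is clearly well defined, so nothing needs to be checked there.

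Finiteness was already noted: $\Subfml_X(A)$ is finite, so there are only finitely many pairs of subsets of it. Non-emptiness follows from Lemma~\ref{lem:L_root} and the Lindenbaum Lemma~\ref{lem:L_lindenbaum}. The tableau $(\Gamma_r^0,\Delta_r^0)$ is $L$-consistent, so it extends to the saturated $L$-consistent tableau $(\Gamma_r,\Delta_r)$, which therefore belongs to $W_A^X$.

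The root condition is the only step with real content, and we split by the form of $B$. If $B$ is not of the form $C\limp D$ with $C\limp D\in\Subfml_X(A)$, then $R_B$ is the total relation by definition, and there is nothing to show. So suppose $B\equiv C\limp D\in\Subfml_X(A)$, and let $(\Gamma,\Delta)\in W_A^X$ be arbitrary. We must show that $C\limp D\in\Delta_r$, or $C\in\Delta$, or $D\in\Gamma$. Since $C,D\in\Subfml_X(A)$ and $(\Gamma,\Delta)$ is saturated, each of $C$ and $D$ lies in $\Gamma$ or in $\Delta$. If $C\in\Delta$ or $D\in\Gamma$, we are done. Otherwise $C\in\Gamma$ and $D\in\Delta$. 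Then, by the definition of $\Delta_r^0$ with witness $(\Gamma,\Delta)$, we have $C\limp D\in\Delta_r^0\subseteq\Delta_r$. In every case the required disjunction holds, so $(\Gamma_r,\Delta_r)R_{C\limp D}(\Gamma,\Delta)$.

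The main subtlety is that $\Delta_r^0$ was designed precisely so that this case analysis closes. The genuinely hard point, the $L$-consistency of $(\Gamma_r^0,\Delta_r^0)$, is already handled by Lemma~\ref{lem:L_root} via the disjunction property. Hence the proof here reduces to the case split above.
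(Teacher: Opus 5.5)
Your proposal is correct and matches the paper's proof: the only substantive step is the root condition, established by the same case split using saturation and the definition of $\Delta_r^0$. Your additional remarks on finiteness and on $(\Gamma_r,\Delta_r)\in W_A^X$, via Lemma~\ref{lem:L_root} and the Lindenbaum Lemma, are ones the paper takes for granted when it fixes $(\Gamma_r,\Delta_r)$.
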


\begin{proof}
    It suffices to show that $(\Gamma_r,\Delta_r)$ is a root.
    Let $(\Gamma,\Delta)\in W_A^X$ and $B\in\PFml$ be arbitrary.
    If $B$ is not of the form $C\limp D$ with $C\limp D\in\Subfml_X(A)$, then
    $(\Gamma_r,\Delta_r)R_B(\Gamma,\Delta)$ holds by the definition of $R_{B}$.

    Suppose $B\equiv C\limp D$ and $C\limp D\in\Subfml_X(A)$.
    If $C\in\Delta$ or $D\in\Gamma$, then $(\Gamma_r,\Delta_r)R_{C\limp D}(\Gamma,\Delta)$ by the definition of $R_{C \limp D}$.
    Otherwise, by saturation, $C\in\Gamma$ and $D\in\Delta$.
    Hence $C\limp D\in\Delta_r^0\subseteq\Delta_r$, and so $(\Gamma_r,\Delta_r)R_{C\limp D}(\Gamma,\Delta)$ by the definition of $R_{C \limp D}$.
    We have proved that $(\Gamma_r,\Delta_r)$ is a root.
\end{proof}

\begin{lem}[Truth Lemma]\label{lem:L_truthlemma}
    Let $B\in\Subfml_X(A)$ and $(\Gamma,\Delta)\in W_A^X$.
    Then
    \[
        M_A^X,(\Gamma,\Delta)\forces B\ \text{if and only if}\ B\in\Gamma.
    \]
\end{lem}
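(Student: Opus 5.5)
We prove the Truth Lemma by induction on the construction of $B\in\Subfml_X(A)$, using that $\Subfml_X(A)$ is closed under subformulas. This closure holds because it is a union of subformula sets together with $\bot$ and $\top\equiv\bot\limp\bot$. The atomic case is the definition of $V$. The case $\bot$ follows from Lemma \ref{lem:L_SCT_property}(2), since $\bot\in\Delta$ and hence $\bot\notin\Gamma$; here $L$-consistency makes $\Gamma\cap\Delta=\emptyset$, because $\lconj\Gamma\limp\ldisj\Delta$ would otherwise be derivable via $\RuleRI$ from the axioms. The cases $B\land C$ and $B\lor C$ follow immediately from the induction hypothesis together with Lemma \ref{lem:L_SCT_property}(4) and (5).

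The main case is $B\equiv C\limp D$. Since $C\limp D\in\Subfml_X(A)$, the relation $R_{C\limp D}$ is the special one from the definition of $M_A^X$. By the induction hypothesis applied to $C$ and $D$, together with saturation and disjointness, a world $(\Gamma_2,\Delta_2)$ satisfies ``$(\Gamma_2,\Delta_2)\forces C$ implies $(\Gamma_2,\Delta_2)\forces D$'' exactly when $C\in\Delta_2$ or $D\in\Gamma_2$.

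For $(\Leftarrow)$, suppose $C\limp D\in\Gamma$. Then $C\limp D\notin\Delta$, so by the definition of $R_{C\limp D}$ every $R_{C\limp D}$-successor $(\Gamma_2,\Delta_2)$ of $(\Gamma,\Delta)$ has $C\in\Delta_2$ or $D\in\Gamma_2$. By the observation above, $(\Gamma,\Delta)\forces C\limp D$.

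For $(\Rightarrow)$, we argue contrapositively. Suppose $C\limp D\notin\Gamma$, so $C\limp D\in\Delta$ by saturation. Then $(\Gamma,\Delta)$ sees every world via $R_{C\limp D}$. It therefore suffices to find a saturated $L$-consistent tableau $(\Gamma_2,\Delta_2)$ with $C\in\Gamma_2$ and $D\in\Delta_2$. By the Lindenbaum Lemma \ref{lem:L_lindenbaum}, this reduces to showing that $(\{C\},\{D\})$ is $L$-consistent, i.e.\ $L\nvdash C\limp D$.

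This last step is where the root construction matters. If $L\vdash C\limp D$, then by Lemma \ref{lem:L_SCT_property}(6) we would have $C\limp D\in\Gamma'$ for every $(\Gamma',\Delta')\in W_A^X$, in particular for $(\Gamma,\Delta)$. This contradicts $C\limp D\notin\Gamma$. Hence the desired world exists, and at it $C$ is forced while $D$ is not. So $(\Gamma,\Delta)\nforces C\limp D$.

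I expect the implication case to be the only real point of care. In particular, one must check that the definition of $R_{C\limp D}$ does not create bad successors when $C\limp D\in\Gamma$; this holds because disjointness forces $C\limp D\notin\Delta$. The rest consists of routine applications of Lemma \ref{lem:L_SCT_property}.
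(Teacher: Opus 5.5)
Your proof is correct and follows the paper's argument essentially step for step. It uses the same induction; in the $(\Rightarrow)$ direction, the same appeal to Lemma \ref{lem:L_SCT_property}(6) and the Lindenbaum Lemma for $(\{C\},\{D\})$; and in the $(\Leftarrow)$ direction, the same disjointness reasoning to exclude the first two disjuncts of $R_{C\limp D}$. One correction to your commentary: the root construction plays no role here, since the step from $L\vdash C\limp D$ to $C\limp D\in\Gamma$ uses only $\top\in\Gamma$ and $\RuleAF$, and the root (together with DP) is needed only in Lemma \ref{lem:L_countermodel_is_FMT_model} to make $M_A^X$ an FMT model.
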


\begin{proof}
    We prove the lemma by induction on the construction of $B$.
    The case where $B$ is a propositional variable follows from the definition of $V$.
    The case $B\equiv\bot$ follows from Lemma \ref{lem:L_SCT_property} (2).
    The cases for conjunction and disjunction follow from Lemma \ref{lem:L_SCT_property} (4) and (5), respectively.
    So, it suffices to prove the case $B\equiv C\limp D$.

    \medskip

    $(\Rightarrow)$: We prove the contraposition.
    Suppose $C\limp D\notin\Gamma$.
    Since $(\Gamma,\Delta)$ is saturated, $C\limp D\in\Delta$.
    We show that $(\{C\},\{D\})$ is $L$-consistent.
    If not, then $L\vdash C\limp D$.
    Since $C\limp D\in\Subfml_X(A)$, it follows $C\limp D\in\Gamma$ from Lemma \ref{lem:L_SCT_property} (6), a contradiction.
    Thus $(\{C\},\{D\})$ is $L$-consistent.
    By Lemma \ref{lem:L_lindenbaum}, there is a saturated $L$-consistent $(A,X)$-tableau $(\Sigma,\Pi)$ extending $(\{C\},\{D\})$.
    Then $(\Sigma,\Pi)\in W_A^X$, $C\in\Sigma$, and $D\in\Pi$.
    By the induction hypothesis, we obtain $(\Sigma,\Pi)\forces C$ and $(\Sigma,\Pi)\nforces D$.
    Since $C\limp D\in\Delta$, we have $(\Gamma,\Delta)R_{C\limp D}(\Sigma,\Pi)$.
    Therefore $(\Gamma,\Delta)\nforces C\limp D$.

    \medskip

    $(\Leftarrow)$:
    Suppose $C\limp D\in\Gamma$.
    Let $(\Sigma,\Pi)\in W_A^X$ be arbitrary tableau such that $(\Gamma,\Delta)R_{C\limp D}(\Sigma,\Pi)$, and suppose $(\Sigma,\Pi)\forces C$.
    By the induction hypothesis, $C\in\Sigma$.
    By the definition of $R_{C\limp D}$, either $C\limp D\in\Delta$, or
    $C\in\Pi$, or $D\in\Sigma$.
    The first case violates the $L$-consistency of $(\Gamma,\Delta)$, because $C\limp D\in\Gamma$.
    The second case violates the $L$-consistency of $(\Sigma,\Pi)$, because $C\in\Sigma$.
    Hence $D\in\Sigma$.
    By the induction hypothesis, $(\Sigma,\Pi)\forces D$.
    Thus $(\Gamma,\Delta)\forces C\limp D$.
\end{proof}

\begin{lem}\label{lem:L_frame}
    The frame of $M_A^X$ is an $L$-frame.
\end{lem}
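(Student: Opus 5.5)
We need to show that for every $B\in X$, the frame $F$ of $M_A^X$ validates $B$. Since each $B\in X$ is closed, Lemma \ref{lem:closed} tells us validity on the frame reduces to validity in the particular model $M_A^X$: for a closed formula, $F\models B$ holds exactly when $M_A^X, x\forces B$ for every world $x$, since forcing at $x$ does not depend on the valuation. So it suffices to show $M_A^X,(\Gamma,\Delta)\forces B$ for every $(\Gamma,\Delta)\in W_A^X$.

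Fix $B\in X$. By the definition of $\Subfml_X(A)$ we have $B\in\Subfml_X(A)$, and $L\vdash B$ because $B$ is an axiom of $L=\LogicVF+X$. Lemma \ref{lem:L_SCT_property} (6) then gives $B\in\Gamma$ for every saturated $L$-consistent $(A,X)$-tableau $(\Gamma,\Delta)$. Applying the Truth Lemma (Lemma \ref{lem:L_truthlemma}) to $B\in\Subfml_X(A)$, we get $M_A^X,(\Gamma,\Delta)\forces B$ for every $(\Gamma,\Delta)\in W_A^X$. Hence $M_A^X\models B$, and by the preceding remark on closed formulas, every model on the same frame agrees with $M_A^X$ on $B$ at each world. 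So $F\models B$, and $F$ is an $L$-frame.

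There is no genuine obstacle here. The one point to check is that we may pass from a single model to the frame. This is exactly where closedness of the axioms in $X$ matters: for arbitrary axioms, the Truth Lemma would only give validity in the canonical model, not in its frame. Lemma \ref{lem:closed} handles this directly. Note also that the proof does not use the specific negative form $\neg C$ of the axioms, only that they are closed and provable in $L$, together with the fact that $\Subfml_X(A)$ was chosen to contain all subformulas of $X$.
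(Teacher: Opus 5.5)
Your proposal is correct and matches the paper's proof step for step. Each axiom $B\in X$ lies in $\Subfml_X(A)$ and is provable, so Lemma \ref{lem:L_SCT_property} (6) puts it in every saturated tableau. The Truth Lemma (Lemma \ref{lem:L_truthlemma}) then gives forcing at every world, and closedness together with Lemma \ref{lem:closed} lifts validity from $M_A^X$ to its frame.
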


\begin{proof}
    Let $E\in X$.  Since $E\in\Subfml_X(A)$ and $L\vdash E$, it follows from Lemma \ref{lem:L_SCT_property} (6) that  $E\in\Gamma$ for every
    $(\Gamma,\Delta)\in W_A^X$.
    By Lemma \ref{lem:L_truthlemma}, $(\Gamma, \Delta) \forces E$ for every $(\Gamma, \Delta) \in W_A^X$.
    Hence the frame of $M_A^X$ validates every element of $X$ by Lemma \ref{lem:closed}.
    Therefore it is an $L$-frame.
\end{proof}

\begin{thm}[Finite frame property for closed negative extensions]\label{thm:finite_frames}
    Let $X$ be a finite set of closed negative axioms and let $L=\LogicVF+X$.
    For any $A\in\PFml$, the following are equivalent:
    \begin{enumerate}
        \item $L\vdash A$.
        \item $F\models A$ for all $L$-frames $F$.
        \item $F\models A$ for all finite $L$-frames $F$.
    \end{enumerate}
\end{thm}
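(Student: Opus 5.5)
We will establish the cycle of implications $1 \Rightarrow 2 \Rightarrow 3 \Rightarrow 1$.

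The implication $1 \Rightarrow 2$ is exactly Theorem \ref{thmr:L_sound}. The implication $2 \Rightarrow 3$ is trivial, since every finite $L$-frame is in particular an $L$-frame.

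The substantive direction is $3 \Rightarrow 1$, which we prove by contraposition. Suppose $L \nvdash A$.

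\begin{itemize}
\item First we claim that the $(A,X)$-tableau $(\{\top\},\{A\})$ is $L$-consistent. Suppose instead that $L \vdash \top \limp A$. Since $L \vdash \top$, an application of $\RuleMP$ would give $L \vdash A$, contradicting our assumption. (Note that $\top, A \in \Subfml_X(A)$, so this is indeed an $(A,X)$-tableau.)
\item By the Lindenbaum Lemma (Lemma \ref{lem:L_lindenbaum}), we extend it to a saturated $L$-consistent tableau $(\Gamma,\Delta) \in W_A^X$ with $A \in \Delta$. By $L$-consistency, $A \notin \Gamma$.
\item Now consider the model $M_A^X$. By Lemma \ref{lem:L_countermodel_is_FMT_model}, it is a finite FMT model. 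By Lemma \ref{lem:L_frame}, its underlying frame is an $L$-frame.
\item Since $A \in \Subfml_X(A)$ and $A \notin \Gamma$, the Truth Lemma (Lemma \ref{lem:L_truthlemma}) gives $M_A^X,(\Gamma,\Delta) \nforces A$.
\end{itemize}

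Hence $A$ is not valid on this finite $L$-frame, so condition 3 fails. This completes the contraposition.

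All the real work, namely the root construction via DP and the Truth Lemma, has already been done in the preceding lemmas. The only point requiring care is the consistency of the initial tableau, which uses $\RuleMP$ together with $L \vdash \top$.
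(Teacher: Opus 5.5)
Your proposal is correct and follows the paper's proof essentially step for step: soundness gives $1\Rightarrow 2$, and for $3\Rightarrow 1$ you extend a consistent tableau by the Lindenbaum Lemma and refute $A$ in the finite $L$-frame $M_A^X$ via the Truth Lemma. Your starting tableau $(\{\top\},\{A\})$ is the same as the paper's $(\emptyset,\{A\})$, since $\lconj\emptyset=\top$. Your explicit $\RuleMP$ argument for its consistency, and your step $A\notin\Gamma$, fill in points the paper leaves implicit.
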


\begin{proof}
    $(1\Rightarrow 2)$ follows from the soundness theorem for closed negative extensions (Theorem \ref{thmr:L_sound}).
    The implication $(2\Rightarrow 3)$ is immediate.

    We prove the contraposition of $(3\Rightarrow 1)$.
    Suppose $L\nvdash A$.
    Then $(\emptyset,\{A\})$ is an $L$-consistent $(A,X)$-tableau.
    By Lemma \ref{lem:L_lindenbaum}, there is a saturated $L$-consistent $(A,X)$-tableau
    $(\Gamma_A,\Delta_A)$ extending $(\emptyset,\{A\})$.
    Then $(\Gamma_A,\Delta_A)\in W_A^X$ and $A\in\Delta_A$.
    By Lemma \ref{lem:L_truthlemma}, we have
    $M_A^X,(\Gamma_A,\Delta_A)\nforces A$.
    By Lemma \ref{lem:L_frame}, the frame of $M_A^X$ is a finite $L$-frame.
    Hence there exists a finite $L$-frame which does not validate $A$.
\end{proof}

For example, we obtain the following characterization results.

\begin{cor}\label{cor:VF}
    For any $A\in\PFml$, the following are equivalent:
    \begin{enumerate}
        \item $\LogicVF\vdash A$.
        \item $F\models A$ for all FMT frames $F$.
        \item $F\models A$ for all finite FMT frames $F$.
    \end{enumerate}
\end{cor}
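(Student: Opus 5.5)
This is the special case $X=\emptyset$ of Theorem~\ref{thm:finite_frames}, so the plan is to instantiate that theorem rather than redo any construction. With $X=\emptyset$ the logic $L=\LogicVF+\emptyset$ is exactly $\LogicVF$: no axioms are added, and the empty set is trivially a finite set of closed negative axioms. The condition ``$F\models B$ for every $B\in X$'' defining an $L$-frame is then vacuous, so the $L$-frames are precisely all FMT frames, and the finite $L$-frames are precisely the finite FMT frames.

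Substituting these identifications into the three equivalent clauses of Theorem~\ref{thm:finite_frames} gives clauses 1--3 of the corollary verbatim. If one wants the chain of implications separately: $(1\Rightarrow2)$ is the soundness theorem for $\LogicVF$ with respect to FMT semantics, and $(2\Rightarrow3)$ is trivial. For $(3\Rightarrow1)$, we use the finite model $M_A^{\emptyset}$, built over the tableaux with $\Subfml_\emptyset(A)=\Subfml(A)\cup\{\bot,\top\}$. By Lemma~\ref{lem:L_countermodel_is_FMT_model} and the Truth Lemma~\ref{lem:L_truthlemma}, it refutes $A$ whenever $\LogicVF\nvdash A$.

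The only step needing any care is the construction of the root in Lemma~\ref{lem:L_root}, which relied on the disjunction property. That property is available for $X=\emptyset$ by Theorem~\ref{thm:VF_has_DP}, since $\LogicVF$ is itself a closed negative extension and is consistent as a sublogic of $\Logic{Int}$. So there is no real obstacle; the proof is a direct instantiation.
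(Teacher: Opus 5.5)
Your proposal is correct and matches the paper, which obtains this corollary as the case $X=\emptyset$ of Theorem~\ref{thm:finite_frames}, where every FMT frame is vacuously an $L$-frame. Your remark that the disjunction property (needed for the root in Lemma~\ref{lem:L_root}) holds for $\LogicVF$ itself is accurate, though that theorem already covers it.
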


\begin{cor}\label{cor:VF_notnot_top}
    For any $A\in\PFml$, the following are equivalent:
    \begin{enumerate}
        \item $\LogicVF+\neg\neg\top\vdash A$.
        \item $F\models A$ for all $\neg\top$-serial frames $F$.
        \item $F\models A$ for all finite $\neg\top$-serial frames $F$.
    \end{enumerate}
\end{cor}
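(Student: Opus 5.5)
This follows directly from Theorem \ref{thm:finite_frames}, instantiated with $X=\{\neg\neg\top\}$. First I check that $\neg\neg\top$ is a closed negative axiom. It contains no propositional variables, it has the form $\neg C$ with $C\equiv\neg\top$, and $\Logic{Int}\vdash\neg\neg\top$. So $\LogicVF+\neg\neg\top$ is a closed negative extension of $\LogicVF$, and Theorem \ref{thm:finite_frames} shows that derivability in it is equivalent both to validity in all $(\LogicVF+\neg\neg\top)$-frames and to validity in all finite such frames.

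It then remains to identify the $(\LogicVF+\neg\neg\top)$-frames. By definition, these are exactly the FMT frames $F$ with $F\models\neg\neg\top$. By Corollary \ref{cor:axioms}(1), these are exactly the $\neg\top$-serial frames. Substituting this description into conditions (2) and (3) of Theorem \ref{thm:finite_frames} gives the three equivalent conditions of the statement.

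If Corollary \ref{cor:axioms}(1) needs to be re-derived, I would use Proposition \ref{prop:closed_negative}(4) with $C\equiv\neg\top$. It says $F\models\neg\neg\top$ iff every $x$ has some $y$ with $xR_{\neg\neg\top}y$ and $y\forces\neg\top$. Here one must be careful: the relation that appears is $R_{\neg\neg\top}$, and the condition also requires $y\forces\neg\top$, which holds iff $y$ has no $R_{\neg\top}$-successor (since no world forces $\bot$). This seems to mismatch the corollary's phrasing, which says ``$\neg\top$-serial''. The consistent reading is probably that $F\models\neg\neg\top$ iff every world fails to force $\neg\top$ (Proposition \ref{prop:closed_negative}(1)), and $x\nforces\neg\top$ iff $x$ has an $R_{\neg\top}$-successor, since every world forces $\top$. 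So the condition is exactly $\neg\top$-seriality, and I would use Proposition \ref{prop:closed_negative}(1) directly.

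The only real work is this bookkeeping: making sure the frame condition is correctly derived via item (1) rather than item (4). Everything else is a direct instantiation of the previously established theorem.
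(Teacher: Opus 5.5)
Your proof is correct and follows the paper's route: instantiate Theorem~\ref{thm:finite_frames} with $X=\{\neg\neg\top\}$, then identify the $(\LogicVF+\neg\neg\top)$-frames as the $\neg\top$-serial frames via Corollary~\ref{cor:axioms}(1). The ``mismatch'' you saw with Proposition~\ref{prop:closed_negative}(4) comes only from instantiating it with $C\equiv\neg\top$, which is the instance about $\neg\neg\neg\top$. The right instance is $C\equiv\top$: it gives, for each $x$, some $y$ with $xR_{\neg\top}y$ and $y\forces\top$, which is exactly $\neg\top$-seriality, so items (1) and (4) agree.
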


\section{Modal companions} \label{sec:MC}

In this section, we study modal companions of $\LogicVF$ and its closed
negative extensions via Corsi's modified G{\"o}del translation.
In Subsection \ref{ssec:HB}, we recall some background on modal companions of intuitionistic and subintuitionistic logics.
In Subsection \ref{ssec:MCS}, we show that closed negative extensions of $\LogicVF$ correspond to suitable closed modal extensions of the pure logic of necessitation $\Logic{N}$.
In particular, we prove that $\Logic{N}$ is one of the modal companions of $\LogicVF$.

\subsection{Historical background}\label{ssec:HB}

In this subsection, we recall some standard background on modal companions of intuitionistic and subintuitionistic logics.

First of all, the celebrated result of G{\"o}del \cite{God33} and
McKinsey--Tarski \cite{MT48} states that intuitionistic logic
$\Logic{Int}$ can be embedded into modal logic $\Logic{S4}$ via the following G\"odel translation.
Let $\MFml$ be the set of all modal formulas.

\begin{defn}[G\"odel translation]
    G{\"o}del translation $(\cdot)^\goedelTr : \PFml \to \MFml$ is defined as follows:
    \begin{enumerate}
        \item $p^\goedelTr = \Box p$ where $p \in \PropVar$,
        \item $\bot^\goedelTr = \bot$,
        \item $(A \land B)^\goedelTr = (A^\goedelTr \land B^\goedelTr)$,
        \item $(A \lor B)^\goedelTr = (A^\goedelTr \lor B^\goedelTr)$,
        \item $(A \limp B)^\goedelTr = \Box(A^\goedelTr \limp B^\goedelTr)$.
    \end{enumerate}
\end{defn}

\begin{fact}[G\"odel--McKinsey--Tarski's theorem \cite{God33, MT48}]
    For any $A \in \PFml$, $\Logic{Int} \proves A$ if and only if $\Logic{S4} \proves A^\goedelTr$.
\end{fact}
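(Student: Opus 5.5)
This is a classical result that the paper only cites, so I would prove it semantically, using the Kripke completeness of $\Logic{Int}$ with respect to finite preorders with persistent valuations and of $\Logic{S4}$ with respect to reflexive transitive frames. Both facts I would take as standard. The key device is a translation lemma that lets the two semantics be compared on the same preordered frame.

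\emph{Translation lemma.} Let $\langle W,\le\rangle$ be a preorder and $V$ a valuation.
\begin{itemize}
\item Read $\le$ as the accessibility relation of an $\Logic{S4}$-model $M$.
\item Let $V^\ast(x)=\{p\mid \forall y\ge x,\ p\in V(y)\}$. This is persistent and gives an intuitionistic model $M^\ast$.
\end{itemize}
By induction on $B\in\PFml$, I would show $M,x\forces B^\goedelTr$ iff $M^\ast,x\forces B$.
\begin{itemize}
\item For $p$: $x\forces\Box p$ holds exactly when $p\in V(y)$ for all $y\ge x$, which is $p\in V^\ast(x)$.
\item For $\bot$, $\land$, $\lor$: the clauses agree directly.
\item For $\limp$: $\Box(B^\goedelTr\limp C^\goedelTr)$ at $x$ says that for all $y\ge x$, if $y\forces B^\goedelTr$ then $y\forces C^\goedelTr$. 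By the induction hypothesis this is the intuitionistic clause.
\end{itemize}
A persistent valuation satisfies $V^\ast=V$, so every intuitionistic model arises this way. The lemma then immediately gives, for each preorder frame $F$: $F\models A$ intuitionistically iff $F\models A^\goedelTr$ modally.

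\emph{Right-to-left direction.} Suppose $\Logic{Int}\nvdash A$. Take a (finite) preordered intuitionistic countermodel $M'$ with persistent valuation and a world $x$ with $x\nforces A$. Since $V^\ast=V$, the lemma gives $x\nforces A^\goedelTr$ in the same structure viewed as an $\Logic{S4}$-model. Soundness of $\Logic{S4}$ for reflexive transitive frames then yields $\Logic{S4}\nvdash A^\goedelTr$.

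\emph{Left-to-right direction.} Suppose $\Logic{Int}\vdash A$ and, for contradiction, $\Logic{S4}\nvdash A^\goedelTr$. By completeness of $\Logic{S4}$ there is a reflexive transitive model $M$ refuting $A^\goedelTr$ at some world. By the lemma, $M^\ast$ is an intuitionistic model refuting $A$ at that world, contradicting soundness of $\Logic{Int}$.

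\emph{Syntactic alternative.} One could instead prove the left-to-right direction by induction on $\Logic{Int}$-derivations, checking in $\Logic{S4}$ that each translated axiom is provable and that modus ponens is preserved. This needs the auxiliary fact $\Logic{S4}\vdash B^\goedelTr\limp\Box B^\goedelTr$, proved by induction on $B$ using axiom $4$, and it is more laborious. The semantic route avoids it.

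\emph{Main obstacle.} The only delicate point is the variable case of the lemma. Translating $p$ as $\Box p$ is exactly what forces persistency, via transitivity of $\le$: if $p\in V^\ast(x)$ and $x\le y$, then every $z\ge y$ also satisfies $z\ge x$, so $p\in V^\ast(y)$. Reflexivity is used so that $\Box p$ at $x$ entails $p$ at $x$, which is why the identification $V^\ast=V$ holds for persistent $V$. Once these points are checked, the rest is routine.
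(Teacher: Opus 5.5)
Your argument is correct and is essentially the standard Kripke-semantic proof the paper alludes to. The paper gives no proof of its own: it cites G\"odel and McKinsey--Tarski and notes, referring to Chagrov--Zakharyaschev, Theorem~3.83, that the key point is that both $\Logic{Int}$ and $\Logic{S4}$ are characterized by preorders, which is exactly what your translation lemma with the persistent valuation $V^\ast$ exploits.
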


This embedding phenomenon is expressed by saying that $\Logic{S4}$ is a \emph{modal companion} of $\Logic{Int}$.
The key to this proof via Kripke semantics is that the corresponding models for both $\Logic{Int}$ and $\Logic{S4}$ are preorders (see \cite[Theorem~3.83]{CZ97}).

Corsi \cite{Cor87} introduced subintuitionistic logics, that is, propositional logics corresponding to models that do not necessarily satisfy the preorder condition and persistency, both of which are required for $\Logic{Int}$.
Corsi also showed that these logics have modal logics weaker than
$\Logic{S4}$ as their modal companions, such as $\Logic{K}$, $\Logic{KT}$,
and $\Logic{K4}$.

When dealing with subintuitionistic logics, the original G{\"o}del translation is no longer the most suitable one.
The reason is that their semantics, such as Kripke, neighborhood, and FMT semantics, do not always ensure persistency for propositional variables.
For this reason, Corsi used a modified G{\"o}del translation in which propositional variables are translated as themselves.

\begin{defn}
    Corsi's modified G{\"o}del translation $(\cdot)^\corsiTr : \PFml \to \MFml$ is defined as follows:
    \begin{enumerate}
        \item $p^\corsiTr = p$ for $p \in \PropVar$,
        \item $\bot^\corsiTr = \bot$,
        \item $(A \land B)^\corsiTr = A^\corsiTr \land B^\corsiTr$,
        \item $(A \lor B)^\corsiTr = A^\corsiTr \lor B^\corsiTr$,
        \item $(A \limp B)^\corsiTr = \Box(A^\corsiTr \limp B^\corsiTr)$.
    \end{enumerate}
\end{defn}

\begin{fact}[Corsi \cite{Cor87}]
    Let $A \in \PFml$.
    \begin{enumerate}
        \item $\LogicF \proves A$ if and only if $\Logic{K} \proves A^\corsiTr$,
        \item $\LogicF + \neg \neg \top \proves A$ if and only if $\Logic{KD} \proves A^\corsiTr$.
    \end{enumerate}
\end{fact}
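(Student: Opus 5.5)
The plan is to argue semantically, passing through Corsi's relational (Kripke) semantics for $\LogicF$ and the standard Kripke semantics for $\Logic{K}$ and $\Logic{KD}$. In Corsi's semantics a frame is a pair $\langle W,R\rangle$ with $R$ an arbitrary binary relation, and a valuation is arbitrary, with no persistency. The clauses for $p$, $\bot$, $\land$, $\lor$ are the classical local ones, and
\[
x\forces A\limp B \iff \text{for all } y \text{ with } xRy,\ y\forces A \text{ implies } y\forces B .
\]
I will take as given Corsi's soundness and completeness results from \cite{Cor87}:
\begin{itemize}
\item $\LogicF$ is sound and complete with respect to all such frames;
\item $\LogicF+\neg\neg\top$ is sound and complete with respect to serial frames.
\end{itemize}
For the modal side I will use the textbook fact that $\Logic{K}$, respectively $\Logic{KD}$, is sound and complete with respect to all, respectively serial, Kripke frames.

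The central step is a translation lemma. Fix a frame $\langle W,R\rangle$ and a valuation $V$, and read them both as a model for $\LogicF$ and as a modal Kripke model. I claim that for every $A\in\PFml$ and $x\in W$,
\[
x\forces A \iff x\models A^\corsiTr .
\]
The proof is by induction on $A$.
\begin{itemize}
\item Propositional variables are immediate, because $p^\corsiTr=p$; this is exactly where Corsi's modification matters, since no $\Box$ is added and so no persistency is needed.
\item The cases $\bot$, $\land$, $\lor$ are immediate.
\item For an implication, $x\models\Box(A^\corsiTr\limp B^\corsiTr)$ holds iff every $R$-successor $y$ with $y\models A^\corsiTr$ satisfies $y\models B^\corsiTr$. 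By the induction hypothesis this is exactly the forcing clause for $A\limp B$.
\end{itemize}

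With the lemma, both equivalences follow by chaining completeness results. For item 1, suppose $\LogicF\proves A$. By soundness $A$ holds at every world of every model, so $A^\corsiTr$ is valid on every Kripke frame, and completeness of $\Logic{K}$ gives $\Logic{K}\proves A^\corsiTr$. Conversely, if $\LogicF\nvdash A$, Corsi's completeness yields a model and a world refuting $A$. The lemma shows that the same world refutes $A^\corsiTr$, so soundness of $\Logic{K}$ gives $\Logic{K}\nvdash A^\corsiTr$. Item 2 is identical with "serial frames" in place of "all frames", using the $\Logic{KD}$ analogues.

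The step I expect to be the main obstacle is the semantic characterization of $\LogicF+\neg\neg\top$. Unfolding the clauses, $x\forces\neg\neg\top$ says that every $R$-successor of $x$ has a successor. So validity of $\neg\neg\top$ on a frame is a condition on successor worlds, which is weaker than seriality.
\begin{itemize}
\item Soundness over serial frames is therefore fine.
\item Completeness must be checked on Corsi's canonical model. There one should verify that the relation is serial, using that $\neg\neg\top$ belongs to every prime theory. If that verification fails, I would fall back on modifying the countermodel by adding a reflexive loop at each dead end. This preserves truth of the relevant subformulas only after a check, and I would carry out that check by the same induction as in the translation lemma.
\end{itemize}
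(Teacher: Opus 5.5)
Your main argument is correct. A translation lemma on a single Kripke frame (the one-relation analogue of Lemma \ref{lem:prop_to_modal_FMT}), chained with soundness and completeness on both sides, is the standard proof. It is also the template the paper uses for Theorem \ref{thm:modal_companion}, although the paper gives no proof of this fact and simply cites Corsi.

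The way you propose to handle your ``main obstacle'' is off, if you ever want to check rather than cite completeness of $\LogicF+\neg\neg\top$ for serial frames.

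\begin{itemize}
\item \emph{Your stated reason for seriality is too weak.} Knowing that $\neg\neg\top$ belongs to every prime theory gives, through the truth lemma, only the condition you computed yourself: every successor has a successor.
\item \emph{What actually gives seriality.} $\neg\neg\top$ is the theorem $\neg\top\limp\bot$, and prime theories are closed under provable implications. So $\neg\top\in\Gamma$ would give $\bot\in\Gamma$, and hence no prime theory contains $\neg\top$. By the truth lemma, every $\Gamma$ then refutes $\top\limp\bot$, i.e.\ every $\Gamma$ has an $R$-successor. Closure under provable implications plays the role that the root plays in Proposition \ref{prop:closed_negative}(1).
\item \emph{Your fallback fails as described.} At a dead end $w$ every implication is vacuously true, so $w\forces\neg\top$. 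After adding $wRw$ we get $w\nforces\neg\top$. So truth of subformulas is not preserved, and no induction ``as in the translation lemma'' can show that it is.
\item \emph{A different argument would be needed to rescue the fallback.} In a frame validating $\neg\neg\top$, a dead end has no predecessors, so it is invisible from other worlds. If the refuting world is itself a dead end, $A$ stays false there after adding the loop. This is because the top level of $A$ is a monotone combination of atoms and implications, and all of those implications were true before. With the correct seriality argument above, you do not need the fallback at all.
\end{itemize}
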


In contrast to Corsi's relational treatment of $\LogicF$, the weak
subintuitionistic logic $\LogicWF$ of Maleki and de Jongh is naturally connected with neighborhood semantics.
On the modal side, this corresponds to the non-normal modal logic $\Logic{E}$ and its extensions, which are weaker than $\Logic{K}$ (see \cite{Che80, Pac17}).
De Jongh and Maleki \cite{dJM18,dJM19} showed that suitable extensions of $\LogicWF$ have extensions of $\Logic{E}$ as their modal companions via Corsi's modified G{\"o}del translation.







Thus, while $\LogicF$ is linked to normal modal logics such as $\Logic{K}$ and $\Logic{KD}$, and $\LogicWF$ is linked to neighborhood-based non-normal modal logics such as $\Logic{E}$, we shall show in the next subsection that our still weaker logic $\LogicVF$ is linked to the pure logic of necessitation $\Logic{N}$.

\subsection{Modal companions of $\LogicVF$ and its closed negative extensions}\label{ssec:MCS}

We now turn to our logic $\LogicVF$ and its closed negative extensions.
In this subsection, we show that they correspond, via Corsi's modified
G{\"o}del translation, to the pure logic of necessitation $\Logic{N}$ and its closed modal extensions.

We first recall the modal logic $\Logic{N}$ and its FMT semantics.
The logic $\Logic{N}$ is the modal logic obtained by adding the necessitation rule $\dfrac{A}{\Box A}$ to classical propositional logic.
A modal formula is said to be \emph{closed} if it contains no propositional
variables.
For each finite set $Y$ of closed modal formulas, let $\Logic{N} + Y$ denote the modal logic obtained by adding all formulas in $Y$ as
axioms to $\Logic{N}$.

\begin{defn}[Modal FMT frames and models]
    A \emph{modal FMT frame} is a pair $F=\langle W,\{R_A\}_{A\in\MFml}\rangle$ such that $W$ is a non-empty set and
    $R_A$ is a binary relation on $W$ for each modal formula $A\in\MFml$.

    A \emph{modal FMT model} is a pair $M=\langle F,V\rangle$, where $F$ is a
    modal FMT frame and $V\colon W\to 2^{\PropVar}$ is a valuation.
    The satisfaction relation is defined as usual for Boolean connectives, and the modal clause is:
    \begin{itemize}
        \item $M,x\forces \Box A : \iff$ for every $y\in W$, if $xR_Ay$, then
              $M,y\forces A$.
    \end{itemize}

    A modal formula $A$ is said to be \emph{valid} in a modal FMT model $M$ (written $M\models A$) if $M,x\forces A$ for all $x\in W$.
    A modal formula $A$ is \emph{valid} in a modal FMT frame $F$ (written $F\models A$) if $A$ is valid in all modal FMT models based on $F$.
\end{defn}

\begin{fact}[Fitting, Marek and Truszczy\'nski \cite{FMT92}]
    The logic $\Logic{N}$ is sound and complete with respect to the class of all modal FMT frames.
    Moreover, $\Logic{N}$ has the finite frame property with respect to this semantics.
\end{fact}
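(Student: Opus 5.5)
The statement is the Fitting--Marek--Truszczy\'nski fact: $\Logic{N}$ is sound and complete with respect to all modal FMT frames, and it has the finite frame property. I would prove it along the same lines as Section \ref{sec:FFP}, now in the classical modal setting. The main difference is that no root is required, so no analogue of Lemma \ref{lem:L_root} or of the disjunction property is needed.

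\emph{Soundness.} I argue by induction on the length of an $\Logic{N}$-proof. Classical tautologies are valid at every world, since the Boolean clauses are classical, and modus ponens preserves truth at each world. For necessitation, suppose $F\models A$, let $x\in W$, and let $y$ satisfy $xR_Ay$. Then $y\forces A$, so $x\forces\Box A$. The only thing this uses is that $A$ holds at every world, which $F\models A$ gives.

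\emph{Completeness with finite frames.} Suppose $\Logic{N}\nvdash A$, and let $S$ be the finite set of subformulas of $A$. Let $W$ be the set of all maximal $\Logic{N}$-consistent subsets of $S\cup\{\neg B\mid B\in S\}$, that is, sets containing exactly one of $B,\neg B$ for each $B\in S$. This set is finite, and some member contains $\neg A$ by a Lindenbaum argument as in Lemma \ref{lem:L_lindenbaum}. Put $V(p)=\{\Gamma\mid p\in\Gamma\}$. For a modal formula $B$ with $\Box B\in S$, define
\[
\Gamma R_B \Sigma :\iff \Box B\notin\Gamma \text{ or } B\in\Sigma,
\]
and let $R_B$ be arbitrary (say, universal) for other $B$. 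The key point is that the relation may depend on $B$ itself, exactly as in the definition of $M_A^X$.

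\emph{Truth lemma.} I prove $\Gamma\forces B\iff B\in\Gamma$ for $B\in S$ by induction on $B$. The Boolean cases are the standard ones for maximal consistent sets. For $\Box B$ there are two directions.
\begin{itemize}
\item If $\Box B\in\Gamma$, then every $R_B$-successor contains $B$, so by the induction hypothesis it forces $B$.
\item If $\Box B\notin\Gamma$, then $\Gamma$ is $R_B$-related to every world, so I need some $\Sigma\in W$ with $B\notin\Sigma$. This requires $\neg B$ to be $\Logic{N}$-consistent, i.e.\ $\Logic{N}\nvdash B$. If instead $\Logic{N}\vdash B$, then $\Logic{N}\vdash\Box B$ by necessitation, so $\Box B$ belongs to every maximal consistent set, contradicting $\Box B\notin\Gamma$. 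Lindenbaum then extends $\{\neg B\}$ to the required $\Sigma$.
\end{itemize}
This use of necessitation is the only place the rule matters, and it is the main obstacle. It works only because $R_B$ is allowed to relate $\Gamma$ to all worlds when $\Box B\notin\Gamma$, which is what the formula-indexed relations make possible. The world containing $\neg A$ then refutes $A$ in a finite modal FMT frame, which yields completeness and the finite frame property at once.
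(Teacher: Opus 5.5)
Your proof is correct and is essentially the paper's own argument: the paper proves the more general Theorem \ref{thm:N_FFP_appendix} for $\Logic{N}+Y$ in the Appendix, and taking $Y=\emptyset$ gives your construction, with the same relation $\Gamma R_B\Sigma :\iff \Box B\notin\Gamma$ or $B\in\Sigma$ and the same use of necessitation in the $\Box$-case of the truth lemma. The differences are cosmetic. You close the subformulas under plain negation where the paper uses ${\sim}$, and you leave $R_B$ arbitrary when $\Box B\notin S$, whereas the paper's uniform definition makes it universal automatically.
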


We shall also use the corresponding finite frame property for closed modal
extensions of $\Logic{N}$.  Since the proof is a straightforward adaptation of
the usual finite frame construction for $\Logic{N}$, we give the details in
Appendix.

\begin{thm}\label{thm:N_FFP}
    Let $Y$ be a finite set of closed modal formulas and let $L = \Logic{N}+Y$.
    For any modal formula $A$, the following are equivalent:
    \begin{enumerate}
        \item $L\vdash A$.
        \item $F\models A$ for all modal FMT frames $F$ validating every formula in $Y$.
        \item $F\models A$ for all finite modal FMT frames $F$ validating every
              formula in $Y$.
    \end{enumerate}
\end{thm}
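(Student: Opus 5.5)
The plan is to follow the usual cycle $(1\Rightarrow 2\Rightarrow 3\Rightarrow 1)$. This parallels the proof of Theorem~\ref{thm:finite_frames} and the finite construction of Fitting, Marek and Truszczy\'nski for $\Logic{N}$.

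For $(1\Rightarrow 2)$, I argue by induction on the length of an $L$-proof. Classical tautologies are valid in every modal FMT frame because the Boolean clauses are classical, and modus ponens preserves validity. For necessitation, if $F\models B$ then every $y$ forces $B$, so every $x$ forces $\Box B$, whatever $R_B$ is. Axioms in $Y$ are valid by the hypothesis on $F$. The implication $(2\Rightarrow 3)$ is trivial.

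For $(3\Rightarrow 1)$, I argue by contraposition. Suppose $L\nvdash A$, and set $S:=\Subfml(A)\cup\bigcup_{B\in Y}\Subfml(B)$, which is finite and closed under subformulas. I take $W$ to be the set of maximal $L$-consistent subsets of $S$, where $\Gamma\subseteq S$ is maximal consistent if for each $B\in S$ exactly one of $B$ and $\neg B$ is implied, and $L\nvdash\neg\lconj\Gamma$. Equivalently, $W$ is the set of consistent atoms over $S$; it is finite, and some $\Gamma_0\in W$ contains $\neg A$, by classical Lindenbaum, since $L\nvdash A$. I define $V(p):=\{\Gamma\mid p\in\Gamma\}$. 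For each $\Box C\in S$, I set $\Gamma R_C\Delta$ if and only if $\Box C\notin\Gamma$ or $C\in\Delta$. For every other formula $C$, I let $R_C$ be arbitrary, say total.

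The truth lemma is proved by induction, and the only interesting case is $\Box C\in S$. If $\Box C\in\Gamma$, every $R_C$-successor contains $C$, so $\Gamma\forces\Box C$ by the induction hypothesis. If $\Box C\notin\Gamma$, I need some $\Delta\in W$ with $C\notin\Delta$, that is, $\neg C$ is consistent. If no such $\Delta$ existed, then $L\vdash C$, hence $L\vdash\Box C$ by necessitation, and $\Box C$ would lie in every consistent atom, a contradiction. This is exactly where necessitation is used and nothing more is needed, which explains why the indexed relations suffice. Since no root is required for modal FMT frames, no analogue of Lemma~\ref{lem:L_root} is needed.

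The remaining step is that the finite frame validates $Y$. Each $E\in Y$ is closed, so its truth at a world is independent of the valuation, by the modal analogue of Lemma~\ref{lem:closed}, proved by a trivial induction. Since $E\in S$ and $L\vdash E$, every atom contains $E$, and by the truth lemma every world forces $E$ under every valuation. Hence the frame validates $Y$ and refutes $A$ at $\Gamma_0$. I expect the main care to lie in setting up the atoms so that theorems of $L$ in $S$ belong to every atom, and in ensuring closedness is what allows passing from one model to the whole frame. Both points seem routine.
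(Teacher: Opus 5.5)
Your proposal is correct and follows essentially the same route as the paper's appendix proof. Both use the canonical relation $\Gamma R_C\Delta$ iff $\Box C\notin\Gamma$ or $C\in\Delta$, the same single use of necessitation in the $\Box$-case of the truth lemma, and the same closedness argument for showing the finite frame validates $Y$. The only differences are cosmetic: you work with atoms over the plain subformula closure, whereas the paper closes $\Subfml_Y(A)$ under ${\sim}$. Also, your first phrasing of ``maximal consistent'' should be read as the atom condition, that $\lconj\Gamma\land\lconj\{\neg B\mid B\in S\setminus\Gamma\}$ is $L$-consistent; read literally, it would let $\Gamma$ omit formulas it implies and break the truth lemma.
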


We relate propositional FMT models and modal FMT models via Corsi's
modified G{\"o}del translation.
For a finite set $X$ of closed negative axioms, we define $X^\ast :=\{\neg C^\corsiTr\mid \neg C\in X\}$.
Since every formula in $X$ is closed, every formula in $X^\ast$ is a
closed modal formula.
So, $\Logic{N}+X^\ast$ is a closed modal extension of $\Logic{N}$.

\begin{lem}\label{lem:prop_to_modal_FMT}
    Let $M^\mathrm{P}=\langle W,\{R^\mathrm{P}_A\}_{A\in\PFml},r,V \rangle$ be a propositional FMT model.
    We define a modal FMT model $M^\mathrm{M}=\langle W,\{R^\mathrm{M}_B\}_{B\in\MFml},V \rangle$ as follows.
    \begin{itemize}
        \item If $B = (C^\corsiTr\limp D^\corsiTr)$ for some
              $C,D\in\PFml$, then $xR^\mathrm{M}_By : \iff xR^\mathrm{P}_{C\limp D}y$.
        \item Otherwise, $xR^\mathrm{M}_By$ always holds.
    \end{itemize}
    Then, for every $A\in\PFml$ and every $x\in W$,
    \[
        M^\mathrm{P},x\forces A\ \text{if and only if}\ M^\mathrm{M},x\forces A^\corsiTr.
    \]
\end{lem}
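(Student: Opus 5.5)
The proof goes by induction on the construction of $A\in\PFml$, simultaneously for all $x\in W$. Before the induction we need one observation, which is the only delicate point: the definition of $R^\mathrm{M}_B$ must be unambiguous. We need that $(\cdot)^\corsiTr$ is injective, so that if $B = C^\corsiTr\limp D^\corsiTr = C'^\corsiTr\limp D'^\corsiTr$, then $C\equiv C'$ and $D\equiv D'$, and hence $R^\mathrm{P}_{C\limp D}$ is determined by $B$. Injectivity is itself a routine induction on formulas. The translation preserves the outermost connective: variables go to variables, $\bot$ to $\bot$, $\land$ to $\land$, $\lor$ to $\lor$, and $\limp$ to a formula whose main connective is $\Box$. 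Moreover, it never produces a top-level modal $\limp$; a modal $\limp$ occurs only directly under $\Box$. So two formulas with the same image must have the same main connective, and their components must have the same images, to which the induction hypothesis applies. I expect this well-definedness check to be the main (small) obstacle. Everything else is mechanical.

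For the induction itself, the base cases are immediate. For $A\equiv p$ we have $p^\corsiTr=p$, and both models use the same valuation $V$. For $A\equiv\bot$, neither side is ever forced. The cases $A\equiv B\land C$ and $A\equiv B\lor C$ follow directly from the induction hypothesis at the same world $x$, since $(\cdot)^\corsiTr$ commutes with $\land$ and $\lor$ and both forcing clauses are Boolean.

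The key case is $A\equiv B\limp C$. By definition, $M^\mathrm{M},x\forces\Box(B^\corsiTr\limp C^\corsiTr)$ iff for every $y$ with $xR^\mathrm{M}_{B^\corsiTr\limp C^\corsiTr}y$ we have $M^\mathrm{M},y\forces B^\corsiTr\limp C^\corsiTr$, that is, $M^\mathrm{M},y\forces B^\corsiTr$ implies $M^\mathrm{M},y\forces C^\corsiTr$. By the first clause of the definition of $R^\mathrm{M}$ (well defined by the observation above), $xR^\mathrm{M}_{B^\corsiTr\limp C^\corsiTr}y$ iff $xR^\mathrm{P}_{B\limp C}y$. By the induction hypothesis applied to $B$ and $C$ at $y$, the implication at $y$ is equivalent to "$M^\mathrm{P},y\forces B$ implies $M^\mathrm{P},y\forces C$". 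Combining these, we get exactly the propositional FMT clause for $M^\mathrm{P},x\forces B\limp C$.

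Note that the root $r$ plays no role in this argument; the lemma uses only the relations $R^\mathrm{P}_{C\limp D}$. This concludes the plan.
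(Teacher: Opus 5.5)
Your proof is correct and follows essentially the same route as the paper: induction on $A$ with immediate atomic and Boolean cases, and the implication case obtained by unfolding the $\Box$-clause, replacing $R^\mathrm{M}_{B^\corsiTr\limp C^\corsiTr}$ by $R^\mathrm{P}_{B\limp C}$, and applying the induction hypothesis at each successor $y$. Your additional argument that $(\cdot)^\corsiTr$ is injective, so that the first clause defining $R^\mathrm{M}$ is unambiguous, is sound; the paper leaves this point implicit.
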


\begin{proof}
    We prove the lemma by induction on the construction of $A$.
    The cases of propositional variables, $\bot$, conjunction, and disjunction are immediate.

    Suppose $A\equiv (C\limp D)$.
    By the induction hypothesis, for every $x\in W$, $M^\mathrm{P},x\forces C$ if and only if $M^\mathrm{M},x\forces C^\corsiTr$,
    and similarly for $D$.
    Also, $(C\limp D)^\corsiTr=\Box(C^\corsiTr\limp D^\corsiTr)$.
    \begin{align*}
        M^\mathrm{P},x \forces C \to D & \iff \forall y (x R^{\mathrm{P}}_{C \to D} y\ \&\ M^\mathrm{P},y \forces C \Rightarrow M^\mathrm{P},y \forces D)                                     \\
                                       & \iff \forall y (x R^{\mathrm{P}}_{C \to D} y\ \&\ M^\mathrm{P},y \forces C \Rightarrow M^\mathrm{P},y \forces D)                                     \\
                                       & \iff \forall y (x R^{\mathrm{M}}_{C^\corsiTr \to D^\corsiTr} y\ \&\ M^\mathrm{M},y \forces C^\corsiTr \Rightarrow M^\mathrm{M},y \forces D^\corsiTr) \\
                                       & \iff \forall y (x R^{\mathrm{M}}_{C^\corsiTr \to D^\corsiTr} y\ \&\ M^\mathrm{M},y \forces C^\corsiTr\to D^\corsiTr)                                 \\
                                       & \iff M^\mathrm{M},x \forces \Box (C^\corsiTr\to D^\corsiTr)                                                                                          \\
                                       & \iff M^\mathrm{M},x \forces (C \to D)^\corsiTr. \qedhere
    \end{align*}
\end{proof}

We also use the following converse construction.

\begin{lem}\label{lem:modal_to_prop_FMT}
    Let $M^\mathrm{M}=\langle W,\{R^\mathrm{M}_B\}_{B\in\MFml},V\rangle$ be a
    modal FMT model.
    Take a fresh point $r\notin W$.
    We define a tuple $M^\mathrm{P}=\langle W\cup\{r\},\{R^\mathrm{P}_A\}_{A\in\PFml},r,V^\ast\rangle$ as follows.
    \begin{itemize}
        \item For $x\in W$, let $V^\ast(x)=V(x)$, and let $V^\ast(r)=\PropVar$.
        \item If $A\equiv (C\limp D)$, then $xR^\mathrm{P}_Ay : \iff x=r$ or $(x,y\in W$ and $xR^\mathrm{M}_{C^\corsiTr\limp D^\corsiTr}y)$.
        \item Otherwise, $xR^\mathrm{P}_Ay : \iff$ either $x=r$ or $x, y\in W$.
    \end{itemize}
    Then $M^\mathrm{P}$ is a propositional FMT model.
    Moreover, for every $A\in\PFml$ and every $x\in W$,
    \[
        M^\mathrm{P},x\forces A\ \text{if and only if}\ M^\mathrm{M},x\forces A^\corsiTr.
    \]
\end{lem}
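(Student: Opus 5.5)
First I check that $M^\mathrm{P}$ is a propositional FMT model. $W\cup\{r\}$ is non-empty. By definition, $rR^\mathrm{P}_Ay$ holds for every $A\in\PFml$ and every $y\in W\cup\{r\}$. So $r$ is a root, and $V^\ast$ is a valuation. One subtlety affects the translation step: a modal formula $B$ might arise as $C^\corsiTr\limp D^\corsiTr$ from more than one pair $(C,D)$. I will check that this cannot happen, i.e., that $(\cdot)^\corsiTr$ is injective. This follows by a straightforward induction, since the translation is compositional and sends each connective to a distinct outermost shape: $p$, $\bot$, $\land$, $\lor$ and $\Box$ respectively. Even without injectivity, the definition of $R^\mathrm{P}_{C\limp D}$ refers directly to $R^\mathrm{M}_{C^\corsiTr\limp D^\corsiTr}$, so it is well defined either way.

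The main claim is proved by induction on $A$, for all $x\in W$ simultaneously. For a propositional variable $p$ and $x\in W$, we have $V^\ast(x)=V(x)$, so $M^\mathrm{P},x\forces p$ iff $M^\mathrm{M},x\forces p=p^\corsiTr$. The cases $\bot$, $\land$ and $\lor$ are immediate from the induction hypothesis, because the forcing clauses coincide pointwise. For $A\equiv C\limp D$, take $x\in W$. Since $x\neq r$, $xR^\mathrm{P}_{C\limp D}y$ holds iff $y\in W$ and $xR^\mathrm{M}_{C^\corsiTr\limp D^\corsiTr}y$. Thus the $R^\mathrm{P}_{C\limp D}$-successors of $x$ are exactly its $R^\mathrm{M}_{C^\corsiTr\limp D^\corsiTr}$-successors, and all of them lie in $W$. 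The induction hypothesis applies to each such $y$ for both $C$ and $D$, and a chain of equivalences like the one in the proof of Lemma \ref{lem:prop_to_modal_FMT} gives
\[
M^\mathrm{P},x\forces C\limp D \iff M^\mathrm{M},x\forces \Box(C^\corsiTr\limp D^\corsiTr).
\]

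The only point needing care is the one just handled in the implication case. The new root $r$ is never an $R^\mathrm{P}$-successor of a world in $W$, so the induction hypothesis never has to be applied at $r$. This is why the statement is restricted to $x\in W$ and why the choice $V^\ast(r)=\PropVar$ does no harm. I therefore formulate the induction hypothesis only over $W$ and verify that no step leaves $W$.
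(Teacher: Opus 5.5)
Your proposal is correct and follows essentially the paper's proof: you check that $r$ is a root directly from the definition of $R^\mathrm{P}$, then run the same induction as in Lemma \ref{lem:prop_to_modal_FMT}, making explicit the point the paper leaves implicit. That point is that for $x\in W$ the $R^\mathrm{P}_{C\limp D}$-successors of $x$ are exactly its $R^\mathrm{M}_{C^\corsiTr\limp D^\corsiTr}$-successors and all lie in $W$, so an induction hypothesis stated only over $W$ suffices. The injectivity remark is harmless but unnecessary here, as you yourself note.
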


\begin{proof}
    By the definition, $rR^\mathrm{P}_Ay$ holds for every $A\in\PFml$ and every
    $y\in W\cup\{r\}$.
    Hence $r$ is a root, and so $M^\mathrm{P}$ is a propositional FMT model.

    The equivalence is proved by induction on the construction of $A$, in the
    same way as in Lemma \ref{lem:prop_to_modal_FMT}.
\end{proof}

\begin{lem}\label{lem:modal_to_prop_closed_negative}
    Let $C$ be a closed propositional formula and let $M^\mathrm{M}$ and
    $M^\mathrm{P}$ be as in Lemma \ref{lem:modal_to_prop_FMT}.  If
    $M^\mathrm{M}\models\neg C^\corsiTr$, then for all $x \in W \cup \{r\}$, we have $M^\mathrm{P}, x \nVdash C$.
\end{lem}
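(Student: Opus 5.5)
The statement has two parts: worlds $x \in W$ and the fresh root $r$. We handle them separately.

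For $x \in W$, we use the equivalence from Lemma \ref{lem:modal_to_prop_FMT}. Since $M^\mathrm{M}\models\neg C^\corsiTr$, we have $M^\mathrm{M},x\forces \neg C^\corsiTr$ for every $x\in W$. Here $\neg$ on the modal side is the classical (Boolean) negation $C^\corsiTr \limp \bot$ evaluated pointwise, so $M^\mathrm{M},x\nforces C^\corsiTr$. Lemma \ref{lem:modal_to_prop_FMT} then gives $M^\mathrm{P},x\nforces C$ for all $x\in W$.

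Note that $X^\ast$ consists of formulas $\neg C^\corsiTr$, not $(\neg C)^\corsiTr = \Box(C^\corsiTr\limp\bot)$. So the hypothesis is the plain Boolean statement that $C^\corsiTr$ fails everywhere in $W$, which is exactly what we used.

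The remaining case is $x=r$, which Lemma \ref{lem:modal_to_prop_FMT} does not cover; this is the main obstacle. The plan is to prove, by induction on a closed propositional formula $E$, that $M^\mathrm{P},r\forces E$ if and only if $M^\mathrm{P},y\forces E$ for some (equivalently, every) suitable world. That formulation is awkward, so a cleaner plan is a direct structural argument using the specific shape of $C$.

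Since $C$ is closed, it is built from $\bot$ with $\land,\lor,\limp$. We prove by induction on closed $E$ the claim: if $M^\mathrm{P},r\forces E$, then $M^\mathrm{P},x\forces E$ for some $x\in W$ (recall $W\neq\emptyset$).
\begin{itemize}
\item For $\bot$ the claim is vacuous.
\item For $\land$, the case is not immediate, since the witnesses for the two conjuncts may differ. We therefore strengthen the claim to: $r\forces E$ implies $x\forces E$ for \emph{all} $x\in W$.
\item For $\lor$, the strengthened claim follows directly from the induction hypothesis.
\item For $E\equiv E_1\limp E_2$: $r\forces E$ means that for all $y\in W\cup\{r\}$ (as $r R^\mathrm{P}_E y$ always holds), $y\forces E_1$ implies $y\forces E_2$. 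For $x\in W$, the successors of $x$ under $R^\mathrm{P}_E$ lie in $W\subseteq W\cup\{r\}$, so $x\forces E$ as well.
\end{itemize}
Thus $r\forces E$ implies $x\forces E$ for every $x\in W$. Applying this to $E=C$: if $r\forces C$, then some (indeed any) $x\in W$ forces $C$, contradicting the first part. Hence $M^\mathrm{P},r\nforces C$.

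The induction is fully general (closedness is needed only so that $V^\ast(r)=\PropVar$ plays no role, since variables do not occur), and it completes the proof.
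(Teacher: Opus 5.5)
Your proof is correct and has the same structure as the paper's: handle $x\in W$ via Lemma~\ref{lem:modal_to_prop_FMT}, then show by induction on closed formulas that truth at $r$ transfers to every world of $W$, which contradicts the first part because $W\neq\emptyset$. The only difference is that you state the claim inside $M^\mathrm{P}$ (``$r\forces E$ implies $x\forces E$ for all $x\in W$'') instead of as the paper's equivalent ``$r\forces C$ implies $M^\mathrm{M}\models C^\corsiTr$'', so your implication case follows directly from $r$ being a root without the translation lemma; you should, however, state the ``for all $x\in W$'' version from the outset rather than strengthening it partway through the induction.
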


\begin{proof}
    Since $M^{\mathrm{M}}, x \nVdash C^\corsiTr$ for all $x \in W$, by Lemma \ref{lem:modal_to_prop_FMT}, $M^{\mathrm{P}}, x \nVdash C$ for all $x \in W$.

    \begin{cl}
        For any closed formula $C$, if $M^\mathrm{P},r\forces C$, then $M^\mathrm{M}\models C^\corsiTr$.
    \end{cl}
    \begin{proof}[Proof of Claim]
        We prove the claim by inducion on the construction of closed $C$.
        The cases of $\bot$, conjunction, and disjunction are immediate.

        Suppose $C\equiv (D\limp E)$ and $M^\mathrm{P},r\forces D\limp E$.
        Let $y \in W$ be arbitrary element with $M^\mathrm{M},y\forces D^\corsiTr$.
        By Lemma \ref{lem:modal_to_prop_FMT}, we have $M^\mathrm{P},y\forces D$.
        Since $rR^\mathrm{P}_{D\limp E}y$, we have $M^\mathrm{P},y\forces E$.
        By Lemma \ref{lem:modal_to_prop_FMT} again, we obtain $M^\mathrm{M},y\forces E^\corsiTr$.
        We have proved that $M^\mathrm{M}\models D^\corsiTr\limp E^\corsiTr$, and so $M^\mathrm{M}\models \Box(D^\corsiTr\limp E^\corsiTr)$.
        That is, $M^\mathrm{M}\models C^\corsiTr$.
    \end{proof}

    If $M^\mathrm{P},r\forces C$, then it follow from the claim that $M^\mathrm{M}\models C^\corsiTr$, contradicting $M^\mathrm{M}\models\neg C^\corsiTr$.
    Thus $M^\mathrm{P}, r\nforces C$.
\end{proof}

For each $k \geq 0$, let $\neg^k$ be the expression $\overbrace{\neg \neg \cdots \neg}^{k}$.

\begin{thm}\label{thm:modal_companion}
    Let $X$ be a finite set of closed negative axioms.  For any $A\in\PFml$, the
    following are equivalent:
    \begin{enumerate}
        \item $\LogicVF+X\vdash A$.
        \item $\Logic{N}+X^\ast\vdash A^\corsiTr$.
        \item $\Logic{N}+X^\ast+ \{\neg\Box \neg^{2k} \bot \mid k \geq 0\} \vdash A^\corsiTr$.
    \end{enumerate}
\end{thm}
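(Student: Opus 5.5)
We will prove the cycle $(1)\Rightarrow(2)\Rightarrow(3)\Rightarrow(1)$, with all substantive work done semantically using the two finite frame properties (Theorem \ref{thm:finite_frames} and Theorem \ref{thm:N_FFP}) together with the model transfer Lemmas \ref{lem:prop_to_modal_FMT}, \ref{lem:modal_to_prop_FMT} and \ref{lem:modal_to_prop_closed_negative}. The step $(2)\Rightarrow(3)$ is trivial, since the logic in (3) extends $\Logic{N}+X^\ast$.

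For $(1)\Rightarrow(2)$ we argue by contraposition. Suppose $\Logic{N}+X^\ast\nvdash A^\corsiTr$. By Theorem \ref{thm:N_FFP}, there is a modal FMT model $M^\mathrm{M}$ whose frame validates every formula of $X^\ast$, together with a world $x\in W$ such that $M^\mathrm{M},x\nforces A^\corsiTr$.

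Apply Lemma \ref{lem:modal_to_prop_FMT} to obtain the propositional FMT model $M^\mathrm{P}$ with the fresh root $r$. By that lemma, $M^\mathrm{P},x\nforces A$. It remains to check that the frame of $M^\mathrm{P}$ is an $(\LogicVF+X)$-frame. Take $\neg C\in X$. Then $M^\mathrm{M}\models\neg C^\corsiTr$, and Lemma \ref{lem:modal_to_prop_closed_negative} gives that no world of $M^\mathrm{P}$, including $r$, forces $C$. Since $C$ is closed, Lemma \ref{lem:closed} shows this holds on every model based on the same frame. Proposition \ref{prop:closed_negative}(1) then yields that the frame validates $\neg C$. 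Soundness (Theorem \ref{thmr:L_sound}) now gives $\LogicVF+X\nvdash A$.

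For $(3)\Rightarrow(1)$, again by contraposition, suppose $\LogicVF+X\nvdash A$. Theorem \ref{thm:finite_frames} gives a propositional FMT model $M^\mathrm{P}$ on an $L$-frame refuting $A$ at some world. Form $M^\mathrm{M}$ as in Lemma \ref{lem:prop_to_modal_FMT}, which refutes $A^\corsiTr$ at the same world. We must check two things about $M^\mathrm{M}$; since all formulas involved are closed, this is frame-independent of the valuation.

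First, $M^\mathrm{M}$ validates each $\neg C^\corsiTr\in X^\ast$. The frame validates $\neg C$, so no world forces $C$, and by Lemma \ref{lem:prop_to_modal_FMT} no world forces $C^\corsiTr$.

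Second, $M^\mathrm{M}$ validates every $\neg\Box\neg^{2k}\bot$. This is the main obstacle. Note $\Box\neg^{2k}\bot$ is not itself a Corsi translation, because the translation of $\neg^{2k}\bot$ is $\Box(\cdots\to\bot)$. So $R^\mathrm{M}_{\neg^{2k}\bot}$ falls into the ``otherwise'' clause and is the total relation. Hence $x\forces\Box\neg^{2k}\bot$ would require every world to force $(\neg^{2k}\bot)$ read modally; for $k=0$ this says every world forces $\bot$, which is impossible. For $k\geq 1$, note $\neg^{2k}\bot\equiv\neg(\neg^{2k-1}\bot)$ is classical negation in the modal language, so it is equivalent to $\neg\neg^{2k-1}\bot$. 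Here we must check exactly which strings are translations: $\neg^{2k}\bot$ as a modal formula is $\neg^{2k-1}\bot\to\bot$. It is $C^\corsiTr\to D^\corsiTr$ only if $\neg^{2k-1}\bot$ equals some $C^\corsiTr$. This fails, since translations of implications begin with $\Box$, and $\neg^{2k-1}\bot$ is an implication not of that shape. So the relation is total, and validity of $\neg\Box\neg^{2k}\bot$ reduces to the existence of a world refuting the classical formula $\neg^{2k}\bot$. But $\neg^{2k}\bot$ is classically equivalent to $\bot$, so every world refutes it.

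I expect the syntactic bookkeeping about which modal formulas have the form $C^\corsiTr\to D^\corsiTr$ to be the delicate point, using injectivity and the shape of $(\cdot)^\corsiTr$. If that identification is wrong, I would instead modify the construction in Lemma \ref{lem:prop_to_modal_FMT} so that non-translation indices are total, which it already is. Having verified both points, Theorem \ref{thm:N_FFP}, or just soundness of $\Logic{N}$ plus closed axioms on such frames, refutes (3).
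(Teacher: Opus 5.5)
Your proposal is correct and follows the paper's proof essentially step for step. It uses the cycle $(1)\Rightarrow(2)\Rightarrow(3)\Rightarrow(1)$, the transfer Lemmas \ref{lem:modal_to_prop_FMT} and \ref{lem:prop_to_modal_FMT}, Lemma \ref{lem:modal_to_prop_closed_negative} with Proposition \ref{prop:closed_negative}(1) for the axioms in $X$, and the observation that $\neg^{2k}\bot$ is not of the form $C^\corsiTr\limp D^\corsiTr$, so its relation is total and $\neg^{2k}\bot$ fails at every world. Your syntactic check (no Corsi translation has a top-level $\limp$, while $\neg^{2k-1}\bot$ does for $k\geq 1$) spells out the paper's ``easily shown''. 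For the last step, plain soundness (which you mention) is the right tool for the infinite axiom set in (3), rather than Theorem \ref{thm:N_FFP}, which is stated for finite $Y$.
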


\begin{proof}
    Let $L^{\mathrm{P}}=\LogicVF+X$ and $L^{\mathrm{M}}=\Logic{N}+X^\ast$.

    $(1\Rightarrow 2)$: We prove the contraposition.
    Suppose $L^{\mathrm{M}}\nvdash A^\corsiTr$.
    By Theorem \ref{thm:N_FFP}, there is a finite modal FMT frame $F^\mathrm{M}$ validating every formula in $X^\ast$ in which $A^\corsiTr$ is not valid.
    Thus there are a modal FMT model $M^\mathrm{M}$ based on $F^\mathrm{M}$ and $x \in W$ such that $M^\mathrm{M},x \nforces A^\corsiTr$.

    We construct $M^\mathrm{P}$ from $M^\mathrm{M}$ as in Lemma \ref{lem:modal_to_prop_FMT}.
    By the lemma, $M^\mathrm{P},x \nforces A$.

    We show that the frame $F^\mathrm{P}$ of $M^{\mathrm{P}}$ is an $L^{\mathrm{P}}$-frame.
    Let $\neg C\in X$.
    Since $\neg C^\corsiTr\in X^\ast$, we have $M^\mathrm{M}\models\neg C^\corsiTr$.
    By Lemma \ref{lem:modal_to_prop_closed_negative}, $M^\mathrm{P}, y \nVdash C$ for all $y \in W$.
    Thus, $F^\mathrm{P} \models \neg C$ by Proposition \ref{prop:closed_negative} (1).
    Therefore it is an $L^{\mathrm{P}}$-frame.
    By the soundness of $L^{\mathrm{P}}$, we obtain $L^{\mathrm{P}}\nvdash A$.

    \medskip

    $(2\Rightarrow 3)$: Obvious.

    \medskip

    $(3\Rightarrow 1)$: We prove the contraposition.
    Suppose $L^{\mathrm{P}}\nvdash A$.
    By Theorem \ref{thm:finite_frames}, there are a finite $L^{\mathrm{P}}$-frame $F^{\mathrm{P}}$, a propositional FMT model $M^\mathrm{P}$ based on $F^{\mathrm{P}}$, and $x \in M^{\mathrm{P}}$ such that $M^\mathrm{P},x\nforces A$.

    We construct $M^\mathrm{M}$ from $M^\mathrm{P}$ as in Lemma \ref{lem:prop_to_modal_FMT}.
    By the lemma, $M^\mathrm{M},x \nforces A^\corsiTr$.

    We show that $M^\mathrm{M}$ validates all elements of $X^\ast$.
    Let $\neg C\in X$.
    Since $F^\mathrm{P}$ is an $L^{\mathrm{P}}$-frame, $F^\mathrm{P}\models\neg C$.  By Proposition
    \ref{prop:closed_negative} (1), $M^\mathrm{P}, y \nVdash C$ for all $y \in W$.
    Hence, by Lemma \ref{lem:prop_to_modal_FMT}, $M^\mathrm{M}, y \nVdash C^\corsiTr$ for all $y \in W$.
    This means $M^\mathrm{M}\models\neg C^\corsiTr$.

    We fix $k \geq 0$ and we show that $F^\mathrm{M}$ validates $\neg \Box \neg^{2k} \bot$.
    It is easily shown that $\neg^{2k} \bot$ is not of the form $C^\corsiTr \to D^\corsiTr$.
    By the construction in Lemma \ref{lem:prop_to_modal_FMT}, we have $y R^\mathrm{M}_{\neg^{2k} \bot} y$ and $M^{\mathrm{M}}, y \nVdash \neg^{2k} \bot$ for all $y \in W$.
    Then, $F^\mathrm{M}\models\neg\Box \neg^{2k} \bot$.
    We conclude $L^{\mathrm{M}}+ \{\neg\Box \neg^{2k} \bot \mid k \geq 0\} \nvdash A^\corsiTr$.
\end{proof}

\begin{cor}
    For any $A\in\PFml$, the following are equivalent:
    \begin{enumerate}
        \item $\LogicVF\vdash A$.
        \item $\Logic{N}\vdash A^\corsiTr$.
        \item $\Logic{N}+\{\neg\Box \neg^{2k} \bot \mid k \geq 0\} \vdash A^\corsiTr$.
    \end{enumerate}
\end{cor}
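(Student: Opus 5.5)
This corollary is the special case $X=\emptyset$ of Theorem \ref{thm:modal_companion}, so the plan is to instantiate that theorem rather than reprove anything. With $X=\emptyset$, the logic $\LogicVF+X$ is literally $\LogicVF$, since no axioms are added. The associated set is $X^\ast=\{\neg C^\corsiTr\mid \neg C\in\emptyset\}=\emptyset$, so $\Logic{N}+X^\ast$ is just $\Logic{N}$. Likewise, $\Logic{N}+X^\ast+\{\neg\Box\neg^{2k}\bot\mid k\geq 0\}$ is $\Logic{N}+\{\neg\Box\neg^{2k}\bot\mid k\geq 0\}$. Substituting these identifications into items 1–3 of Theorem \ref{thm:modal_companion} yields exactly items 1–3 of the corollary.

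The only thing to check is that the empty set is admissible as a finite set of closed negative axioms. It is: the definition only requires $X$ to be finite and each member to be a closed negative axiom, which holds vacuously. The auxiliary results used in the theorem's proof also specialize without incident when $X=\emptyset$:
\begin{itemize}
\item Theorem \ref{thm:finite_frames} becomes Corollary \ref{cor:VF}, where every FMT frame counts as an $L$-frame.
\item Theorem \ref{thm:N_FFP} with $Y=\emptyset$ becomes the Fitting--Marek--Truszczy\'nski finite frame property for $\Logic{N}$.
\item The closed-negative bookkeeping in both directions of the proof is vacuous.
\end{itemize}

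There is no real obstacle; the whole content sits in Theorem \ref{thm:modal_companion}. If one wanted a direct argument instead, it would follow the same pattern. For $(1\Rightarrow 2)$, take a finite countermodel of $\Logic{N}$, add a fresh root as in Lemma \ref{lem:modal_to_prop_FMT}, and apply soundness of $\LogicVF$. $(2\Rightarrow 3)$ is trivial. For $(3\Rightarrow 1)$, use the completeness of $\LogicVF$ from Corollary \ref{cor:VF}, then translate the countermodel via Lemma \ref{lem:prop_to_modal_FMT}. In that translated model, $R^\mathrm{M}_{\neg^{2k}\bot}$ is total because $\neg^{2k}\bot$ is not of the form $C^\corsiTr\limp D^\corsiTr$. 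Hence each $\neg\Box\neg^{2k}\bot$ is valid there, since $\neg^{2k}\bot$ is false everywhere, being classically equivalent to $\bot$.
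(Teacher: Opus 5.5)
Your proposal is correct and matches the paper: it obtains this corollary simply by taking $X=\emptyset$ in Theorem~\ref{thm:modal_companion}, so that $X^\ast=\emptyset$. Your optional direct sketch also faithfully follows that theorem's proof, including the point that $R^\mathrm{M}_{\neg^{2k}\bot}$ is universal in the translated model.
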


\begin{cor}
    For any $A\in\PFml$, the following are equivalent:
    \begin{enumerate}
        \item $\LogicVF+\neg\neg\top\vdash A$.
        \item $\Logic{N}+\neg\Box\neg\Box\top \vdash A^\corsiTr$.
        \item $\Logic{N}+\neg\Box\neg\Box\top+\{\neg\Box \neg^{2k} \bot \mid k \geq 0\} \vdash A^\corsiTr$.
    \end{enumerate}
\end{cor}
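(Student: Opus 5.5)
The plan is to obtain this as the special case $X=\{\neg\neg\top\}$ of Theorem \ref{thm:modal_companion}. Two things need to be checked: that $X$ is admissible, and that $X^\ast$ is literally $\{\neg\Box\neg\Box\top\}$.

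First, $\neg\neg\top$ is a closed negative axiom. It is closed, it has the form $\neg C$ with $C\equiv\neg\top$, and $\Logic{Int}\vdash\neg\neg\top$. So $\LogicVF+\neg\neg\top$ is a closed negative extension of $\LogicVF$, and Theorem \ref{thm:modal_companion} applies to $X=\{\neg\neg\top\}$. It gives the equivalence of $\LogicVF+\neg\neg\top\vdash A$, $\Logic{N}+X^\ast\vdash A^\corsiTr$, and $\Logic{N}+X^\ast+\{\neg\Box\neg^{2k}\bot\mid k\geq 0\}\vdash A^\corsiTr$.

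Second, we compute $X^\ast=\{\neg C^\corsiTr\}$ with $C\equiv\neg\top\equiv(\top\limp\bot)$ and $\top\equiv(\bot\limp\bot)$:
\[
    \top^\corsiTr=\Box(\bot\limp\bot)=\Box\top,\qquad
    C^\corsiTr=\Box(\top^\corsiTr\limp\bot)=\Box(\Box\top\limp\bot)=\Box\neg\Box\top .
\]
Hence $X^\ast=\{\neg\Box\neg\Box\top\}$, and substituting this into the three clauses of Theorem \ref{thm:modal_companion} gives exactly the three clauses of the corollary.

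The only delicate point is this computation, and it is a matter of syntactic identity rather than provable equivalence. $\Logic{N}$ is not closed under replacement of provable equivalents inside $\Box$: its FMT relations are indexed by formulas, so $\Box(\bot\limp\bot)$ and $\Box B$ for another tautology $B$ may differ. The argument therefore relies on the modal abbreviations $\top\equiv\bot\limp\bot$ and $\neg B\equiv B\limp\bot$ being the same syntactic conventions as on the propositional side, so that $\Box\top$ and $\Box\neg\Box\top$ are the very formulas produced by the translation. If one preferred a modal language with $\top$ primitive, this identification would fail in general. One would then have to rerun the proof of Theorem \ref{thm:modal_companion} with the translated formula itself, and the corollary would be stated with that formula in place of $\neg\Box\neg\Box\top$.
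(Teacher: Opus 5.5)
Your proposal is correct and matches the paper, which obtains this corollary by taking $X=\{\neg\neg\top\}$ in Theorem \ref{thm:modal_companion}. Your check that $\neg\neg\top$ is a closed negative axiom, and your syntactic computation $X^\ast=\{\neg\Box\neg\Box\top\}$, supply exactly the details the paper leaves implicit. Your caution that this must be literal syntactic identity, not provable equivalence, is consistent with the paper's own remark that $\neg\Box\bot$, $\neg\Box\neg\top$ and $\neg\Box\neg\Box\top$ are inequivalent over $\Logic{N}$.
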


The first corollary is obtained by taking $X=\emptyset$.
Hence each logic $L$ with $\Logic{N} \subseteq L \subseteq \Logic{N}+\{\neg\Box \neg^{2k} \bot \mid k \geq 0\}$ is a modal companion of $\LogicVF$.
For example, $\Logic{N}+\neg\Box\bot$, $\Logic{N}+\neg\Box \neg \top$, and $\Logic{N}+\neg\Box\bot + \neg \Box \neg \top$ are modal companions of $\LogicVF$.
The second corollary is obtained by taking $X=\{\neg\neg\top\}$.

Although $\bot$, $\neg\top$, and $\neg\Box\top$ are equivalent over $\Logic{N}$, the formulas $\neg\Box\bot$, $\neg\Box\neg\top$, and $\neg\Box\neg\Box\top$ are not equivalent over $\Logic{N}$.
This is because the modal FMT semantics deals with the relations indexed by $\bot$, $\neg\top$, and $\neg\Box\top$ as independent relations.
The preceding corollaries show that, with respect to Corsi's modified G{\"o}del translation, adding $\neg\Box\bot$ and $\neg\Box\neg\top$ to $\Logic{N}$ does not change the corresponding subintuitionistic logic.
By contrast, $\neg\Box\neg\Box\top$ corresponds to adding the closed negative axiom $\neg\neg\top$ to $\LogicVF$.

\section{Concluding remarks and Future work} \label{sect:conclude}

In this paper, we have introduced a very weak propositional logic $\LogicVF$ and its extensions.
We have also discussed fundamental facts concerning its semantics and its connections to modal logics, especially the pure logic of necessitation $\Logic{N}$.
The study of these logics is still in the early stages, and we believe it can be developed in many directions.
We conclude the present paper by mentioning the following three directions for future work.

The first direction is to develop a sequent calculus for $\LogicVF$.
In particular, it would be natural to find a sequent calculus adapted to FMT semantics in the style of labelled sequent calculi (cf.~\cite{Neg05, DN12}).
This is one of our ongoing projects.


The second is to investigate what happens when adding axioms other than seriality, or more generally axioms other than closed negative axioms.
Candidate axioms are those introduced by Corsi \cite{Cor87} for reflexivity, transitivity, and other properties of Kripke semantics.
From the perspective of modal logic, extensions of the pure logic of necessitation by transitivity axioms are discussed in \cite{Kur24,KS26}.

The third, which arises as a natural question, concerns whether it is possible to introduce a logic weaker than $\LogicVF$ and justify the existence of such a logic by some semantics or other means.

\section*{Acknowledgments}

The first author was supported by JSPS KAKENHI Grant Number JP23K03200.

\bibliographystyle{plain}
\bibliography{refs}

\appendix

\section{Finite frame property for closed modal extensions of $\Logic{N}$}
\label{appendix:N_closed_extensions}

Kogure and Kurahashi \cite{KK25} proved the finite frame property for the logic $\Logic{N}+\neg\Box\bot$.
In this appendix, we prove Theorem \ref{thm:N_FFP}, which extends their theorem by dealing with finite sets of closed modal axioms.

Let $Y$ be a finite set of closed modal formulas and let $L=\Logic{N}+Y$.
For a modal formula $B$, we define ${\sim}B$ as follows: if $B$ is of the form $\neg C$, then ${\sim}B$ is $C$; otherwise, ${\sim}B$ is $\neg B$.
For a modal formula $A$, let $\Subfml_Y(A)$ be the smallest set of modal formulas satisfying the following conditions:
\begin{itemize}
    \item $A\in\Subfml_Y(A)$ and $B\in\Subfml_Y(A)$ for every $B\in Y$,
    \item $\Subfml_Y(A)$ is closed under taking subformulas,
    \item if $B\in\Subfml_Y(A)$, then ${\sim}B \in\Subfml_Y(A)$.
\end{itemize}
Notice that $\Subfml_Y(A)$ is finite because $Y$ is finite.

A subset $\Gamma$ of $\Subfml_Y(A)$ is said to be \emph{$L$-consistent} if $L\nvdash \lconj\Gamma\limp\bot$.
We say that $\Gamma$ is \emph{$(A,Y)$-maximal $L$-consistent} if $\Gamma\subseteq\Subfml_Y(A)$, $\Gamma$ is $L$-consistent, and for every $B\in\Subfml_Y(A)$, either $B\in\Gamma$ or ${\sim}B \in\Gamma$.
It is easy to show that every $L$-consistent subset of $\Subfml_Y(A)$ is included in an $(A,Y)$-maximal $L$-consistent set.

\begin{thm}\label{thm:N_FFP_appendix}
    Let $Y$ be a finite set of closed modal formulas and let $L=\Logic{N}+Y$.
    For any modal formula $A$, the following are equivalent:
    \begin{enumerate}
        \item $L\vdash A$.
        \item $F\models A$ for all modal FMT frames $F$ validating all formulas
              in $Y$.
        \item $F\models A$ for all finite modal FMT frames $F$ validating all
              formulas in $Y$.
    \end{enumerate}
\end{thm}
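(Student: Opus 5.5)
The plan is the usual soundness-plus-finite-canonical-model argument, adapted to modal FMT frames. The implication $(1\Rightarrow 2)$ is soundness: the axioms of classical propositional logic are valid in every modal FMT frame, modus ponens preserves validity in a model, and necessitation preserves validity because if $A$ holds at every world then $\Box A$ holds at every world, whatever $R_A$ is. The formulas in $Y$ are valid by assumption. The implication $(2\Rightarrow 3)$ is trivial. The real work is $(3\Rightarrow 1)$, which I prove by contraposition: assuming $L\nvdash A$, I build a finite model whose frame validates $Y$ and refutes $A$.

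\textbf{The finite model.} Let $W$ be the set of all $(A,Y)$-maximal $L$-consistent sets; it is finite since $\Subfml_Y(A)$ is finite. Put $\Gamma\in V(p)$ iff $p\in\Gamma$. For each $B$ with $\Box B\in\Subfml_Y(A)$, define
\[
\Gamma R_B \Delta :\iff \Box B\notin\Gamma \text{ or } B\in\Delta ,
\]
and let $R_B$ be the empty relation for all other $B$. This follows the FMT idea: a world containing $\Box B$ sees exactly the worlds containing $B$, and a world lacking $\Box B$ sees everything.

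\textbf{Truth lemma.} For $B\in\Subfml_Y(A)$, I claim $\Gamma\forces B$ iff $B\in\Gamma$, by induction on $B$. The Boolean cases use maximality in the usual way. The ${\sim}$ operation replaces plain negation; since ${\sim}{\sim}B$ and $B$ are classically equivalent, each world contains exactly one of $B$ and ${\sim}B$. For the $\Box B$ case:
\begin{itemize}
\item If $\Box B\in\Gamma$, every $R_B$-successor of $\Gamma$ contains $B$, so it forces $B$ by the induction hypothesis, and hence $\Gamma\forces\Box B$.
\item If $\Box B\notin\Gamma$, then $\Gamma$ sees every world, so I need some $\Delta\in W$ with $B\notin\Delta$. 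Such a $\Delta$ exists unless $L\vdash B$. But if $L\vdash B$, then $L\vdash\Box B$ by necessitation, and consistency together with maximality would force $\Box B\in\Gamma$, a contradiction. So $\{{\sim}B\}$ is $L$-consistent, and I extend it to a maximal $\Delta$.
\end{itemize}

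\textbf{Validity of $Y$ and conclusion.} Each $E\in Y$ is closed, so its truth at a world does not depend on the valuation, as in Lemma~\ref{lem:closed}. Moreover $E\in\Subfml_Y(A)$ and $L\vdash E$, so every maximal consistent set contains $E$. By the truth lemma, $E$ is true at every world under every valuation, so the frame validates $Y$. Finally, $L\nvdash A$ makes $\{{\sim}A\}$ $L$-consistent; extending it to a maximal $\Gamma$ gives $A\notin\Gamma$, so $\Gamma\nforces A$, and the finite frame refutes $A$.

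The main obstacle is the routine maximal-set bookkeeping with ${\sim}$ in place of $\neg$: showing each maximal set contains exactly one of $B$ and ${\sim}B$, and that it is closed under $L$-provable consequences within $\Subfml_Y(A)$. I expect this to go through as in standard finite canonical model constructions.
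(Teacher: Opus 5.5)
Your proposal is correct and matches the paper's proof: the same finite model of $(A,Y)$-maximal $L$-consistent sets with $\Gamma R_B\Delta$ iff $\Box B\notin\Gamma$ or $B\in\Delta$, the same use of necessitation to obtain a refuting world in the $\Box$ case, and the same argument that the closed formulas of $Y$ are valid in the frame. The only difference is cosmetic. You make $R_B$ empty when $\Box B\notin\Subfml_Y(A)$, whereas the paper's uniform definition makes it total there; this is harmless because the truth lemma only concerns formulas in $\Subfml_Y(A)$.
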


\begin{proof}
    The implication $(1\Rightarrow 2)$ follows from the soundness of $\Logic{N}$
    with respect to modal FMT semantics.
    The implication $(2\Rightarrow 3)$ is immediate.

    \medskip

    $(3\Rightarrow 1)$: We prove the contraposition of this direction.
    Suppose $L\nvdash A$.
    Then $\{{\sim}A\}$ is $L$-consistent.
    We find an $(A,Y)$-maximal $L$-consistent set $\Gamma_A$ such that ${\sim}A\in\Gamma_A$.

    We define a modal FMT model
    $M_A^Y=\langle W,\{R_B\}_{B\in\MFml},V\rangle$ as follows.
    \begin{itemize}
        \item $W$ is the set of all $(A,Y)$-maximal $L$-consistent sets.
        \item For $\Gamma,\Delta\in W$, define
              $\Gamma R_B\Delta : \iff \Box B\notin\Gamma$ or $B\in\Delta$.
        \item $\Gamma\in V(p) : \iff p\in\Gamma$.
    \end{itemize}
    Since $\Subfml_Y(A)$ is finite, $W$ is finite.

    We prove the following truth lemma by induction on the construction of $B$: for every $B\in\Subfml_Y(A)$ and every
    $\Gamma\in W$,
    \[
        M_A^Y,\Gamma\forces B\ \text{if and only if}\ B\in\Gamma.
    \]
    The cases of propositional variables, $\bot$, and Boolean connectives are standard from maximal $L$-consistency.
    We only prove the case $B\equiv\Box C$.

    \medskip

    $(\Rightarrow)$: We prove the contraposition.
    Suppose $\Box C\notin\Gamma$.
    Since $\Gamma$ is $(A,Y)$-maximal, $\sim\Box C\in\Gamma$.
    We show that $\{\sim C\}$ is $L$-consistent.
    If not, then $L\vdash C$, and hence $L\vdash\Box C$ by necessitation.
    This contradicts the $L$-consistency of $\Gamma$, because $\sim\Box C\in\Gamma$.  Thus $\{{\sim}C\}$ is $L$-consistent.
    Then, we find $\Delta\in W$ such that ${\sim}C\in\Delta$.
    Since $C\notin\Delta$, by the induction hypothesis, we obtain $M_A^Y,\Delta\nforces C$.
    Also, $\Gamma R_C\Delta$ holds because $\Box C\notin\Gamma$.
    Therefore $M_A^Y,\Gamma\nforces\Box C$.

    \medskip

    $(\Leftarrow)$:
    Suppose $\Box C\in\Gamma$.
    Let $\Delta\in W$ be such that $\Gamma R_C\Delta$.
    By the definition of $R_C$, we have $C\in\Delta$.
    By the induction hypothesis, $M_A^Y,\Delta\forces C$.
    Therefore $M_A^Y,\Gamma\forces\Box C$.

    \medskip

    Since ${\sim}A\in\Gamma_A$, we have $A\notin\Gamma_A$ by the $L$-consistency of $\Gamma_A$.
    Hence, by the truth lemma, $M_A^Y,\Gamma_A\nforces A$.

    We show that the frame of $M_A^Y$ validates every formula in $Y$.
    Let $B\in Y$.
    Since $L\vdash B$, every $(A,Y)$-maximal $L$-consistent set $\Gamma$ contains $B$.
    By the truth lemma, $B$ is valid in $M_A^Y$.
    Since $B$ is closed, $B$ is valid in the frame of $M_A^Y$.
\end{proof}
\end{document}